\newtheorem{Theorem}{Theorem}[section]
\newtheorem{Proposition}[Theorem]{Proposition}
\newtheorem{Lemma}[Theorem]{Lemma}
\newtheorem{Corollary}[Theorem]{Corollary}
\newtheorem{Definition}{Definition}[section]
\newcommand{\Rmnum}[1]{\expandafter\@slowromancap\romannumeral #1@}
\newtheorem{remark}{Remark}[section]
\numberwithin{equation}{section}
\title{\Large\bf\boldmath
Asymptotic decomposition for nonlinear damped Klein-Gordon equations}
\author{\large \large Ze Li \qquad  Lifeng Zhao }
\date{}
\begin{document}

\maketitle

{\bf{Abstract } }
In this paper, we proved that if the solution to damped focusing Klein-Gordon equations is global forward in time, then it will decouple into a finite number of equilibrium points with different shifts from the origin. The core ingredient of our proof is the existence of the ``concentration-compact attractor" which yields a finite number of profiles. Using damping effect, we can prove all the profiles are equilibrium points.


\section{Introduction}\label{se1}

In this paper, we consider the following damped focusing Klein-Gordon equation:
\begin{align}\label{1}
\left\{ \begin{array}{l}
 {u_{tt}} - \Delta u + u +2\alpha {u_t} - {\left| u \right|^{p - 1}}u = 0, \\
 u(0) = {u_0},\mbox{   }{\partial _t}u(0) = {u_1}\in \mathcal{H}, \\
 \end{array} \right.
\end{align}
where $\mathcal{H}=H^1(\Bbb R^d)\times L^2(\Bbb R^d)$, $\alpha\ge0$. The energy is given by
$$E(f,g) = \int_{{\Bbb R^d}} {\left( {\frac{1}{2}{{\left| {\nabla f} \right|}^2} + \frac{1}{2}{{\left| f \right|}^2} + \frac{1}{2}{{\left| g \right|}^2} - \frac{1}{{p + 1}}{{\left| f \right|}^{p + 1}}} \right)} dx.$$

 Dispersive equations such as Klein-Gordon equations, wave equations, Schr\"odinger equations have been intensively studied for decades. For $\alpha=0$, namely nonlinear Klein-Gordon equation, T. Cazenave \cite{Ca} gave the following dichotomy: solutions either blow up at finite time or are global forward in time and bounded in $\mathcal{H}$, provided $1<p<\infty$, when $d=1,2$ and $1<p<\frac{d}{d-2}$ if $d\ge3$. For $\alpha>0$, E. Feireisl \cite{F} gave an independent proof of the boundedness of the trajectory to global solutions, for $1<p<1+\min(\frac{d}{d-2},\frac{4}{d})$ when $d\ge3$, and in his paper \cite{F2}, the case $d=1$ is considered. N. Burq, G. Raugel, W. Schlag \cite{BRS} studied the long time behaviors of solutions to nonlinear damped Klein-Gordon equations in radial case. They proved that radial global solutions will converge to equilibrium points as time goes infinity. A natural problem is what happens for non-radial solutions? It is widely conjectured that the solutions will decouple into the superposition of equilibrium points. A positive result given by  E. Feireisl \cite{F} implied there exists a global solution which decouples into a finite number of equilibrium points with different shifts from origin. Indeed, this problem is closely related to the soliton resolution conjecture in dispersive equations. The ( imprecise sense ) soliton resolution conjecture states that for ``generic" large global solutions, the evolution asymptotically decouples into the superposition of divergent solitons, a free radiation term, and an error term tending to zero as time goes to infinity. For more expression and history, see A. Soffer \cite{S}.

 There are a lot of works devoted to the verification of the soliton resolution conjecture. T. Duyckaerts, C. Kenig, and F. Merle \cite{DKM} first make a breakthrough on this topic. For radial data to three dimensional focusing energy-critical wave equations, they proved the solution with bounded trajectory is in fact a superposition of a finite number of rescalings of the ground state plus a radiation term which is asymptotically a free wave. One of the key ingredient of their arguments is the novel tool, called ``channels of energy" introduced by \cite{DKM} \cite{DKM2}. The method developed by them has been applied to many other situations, such as \cite{CKLS1} \cite{CKLS2} \cite{KLS} \cite{KLLS3} \cite{KLLS2} for wave maps, \cite{CKLS3} \cite{DKM3} \cite{JLX} \cite{R} for semilinear wave equations. By a weak version of outer energy inequality, the soliton resolution along a sequence of times was proved by R. Cote, C. Kenig, A. Lawrie and W. Schlag \cite{CKLS3} for four dimensional critical wave equations in radial case, by R. Cote \cite{C} for equivariant wave maps, and by H. Jia, and C. Kenig \cite{JK} for semilinear wave equations, wave maps.

 It is known that (\ref{1}) admits a radial positive stationary solution with the minimized energy among all the non-zero stationary solutions.
 Besides the ground state, (\ref{1}) also has an infinite number of nodal solutions which owns zero points. (see for instance H. Berestycki, and P.L. Lions \cite{BL}). Hence it seems that subcritical problems need different techniques. The dynamics of solutions below and slightly above the ground state is known. If $\alpha=0$, for initial data with energy below the ground state, I.E. Payne, and D.H. Sattinger \cite{PS} proved that the solution either blows up in finite time or scatters to zero. K. Nakanishi, and W. Schlag \cite{NS} described the asymptotics of the solutions with energy slightly larger than the ground state. In fact they proved the trichotomy forward in time: the solution (1) either blows up at finite time (2) or globally exists and scatters to zero (3) or globally exists and scatters to the ground states. In radial setting, the above trichotomy was obtained in K. Nakanishi, and W. Schlag \cite{NS1}, followed by K. Nakanishi, and W. Schlag \cite{NS2} in non-radial case. The main technical ingredient of their papers is the ``one pass" theorem which excludes the existence of (almost) homoclinic orbits between the ground state and (almost) heteroclinic orbits connecting ground state $Q$ with $-Q$. N. Burq, G. Raugel, and W. Schlag \cite{BRS} studied the longtime dynamics for damped Klein-Gordon equations in radial case. By developing some dynamical methods especially invariant manifolds, they proved the $\omega$-limit set of the trajectory is just one single point, hence they showed the dichotomy in forward time (1) the solution either blows up at finite time, (2) or converges to some equilibrium point.

In this paper, we aim to study the long time behaviors of damped Klein-Gordon equations without radial assumptions.
Let
$$\lambda (d) = \left\{ \begin{array}{l}
 \infty ,\mbox{   }\mbox{  }\mbox{   }\mbox{  }\mbox{  }\mbox{   }\mbox{  }\mbox{   }d = 1,2 \\
 1 + \frac{4}{{d - 2}},\mbox{  }d = 3,4.
 \end{array} \right.
$$
Then, we have the main theorem as follows:
\begin{Theorem}
Let $\alpha>0$, $1\le d\le 4$, $1<p<\lambda(d)$. For any data $(u_0,u_1)\in\mathcal{H}$. Then\\
$(i)$ either the solution of (\ref{1}) blows up at finite positive time,\\
$(ii)$ or it is global forward in time with unbounded trajectory;\\
$(iii)$ or for any time sequence $t_n\to\infty$, up to a subsequence, there exist $0\le J<\infty$, $x_{j,n}\in \Bbb R^d$ for $j=1,2,...,J$ and equilibrium points $\{Q^j\}$ such that
$$u(t_n) = \sum\limits_{j = 1}^J {Q^j}(x - x_{j,n}) + o_{H^1}(1),$$
and $\mathop {\lim }\limits_{t \to \infty } {\partial _t}u(t) = 0$, in $L^2$, where $\{x_{j,n}\}$ satisfies the separation property:
$$
\mathop {\lim }\limits_{n \to \infty } \left| {{x_{j,n}} - {x_{i,n}}} \right| = 0, \mbox{  }{\rm{for}}\mbox{  }i\neq j.
$$
\end{Theorem}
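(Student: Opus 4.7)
The local theory for \eqref{1} in $\mathcal{H}$ with subcritical $p<\lambda(d)$ gives well-posedness and a blow-up alternative, which immediately yields the dichotomy between case (i) and global forward-in-time solutions; within the global regime, either $\sup_{t\ge 0}\|(u(t),u_t(t))\|_{\mathcal{H}}=\infty$ (case (ii)), or the trajectory is bounded in $\mathcal{H}$, in which case I want to establish (iii). So I set $M:=\sup_{t\ge 0}\|(u(t),u_t(t))\|_{\mathcal{H}}<\infty$ and proceed.

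The first step is to extract the dissipation. Pairing \eqref{1} with $u_t$ gives $\tfrac{d}{dt}E(u,u_t)=-2\alpha\|u_t\|_{L^2}^{2}$; since $E$ is bounded on bounded subsets of $\mathcal{H}$, integration yields $\int_0^\infty\|u_t(s)\|_{L^2}^{2}\,ds<\infty$. A derivative bound on $\|u_t\|_{L^2}^{2}$ via the equation (using the $\mathcal{H}$-bound $M$) shows this quantity is uniformly continuous in $t$, so $\|u_t(t)\|_{L^2}\to 0$ as $t\to\infty$, which is the pointwise statement in (iii).

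For the resolution, fix $t_n\to\infty$. Since $u(t_n)$ is bounded in $H^1$ and the embedding $H^1\hookrightarrow L^{p+1}$ is subcritical, I apply G\'erard's translation profile decomposition: passing to a subsequence one obtains profiles $V^j\in H^1$ and centers $x_{j,n}\in\Bbb R^d$ with $|x_{j,n}-x_{i,n}|\to\infty$ for $i\ne j$, such that
\[
u(t_n)=\sum_{j=1}^{J} V^j(\cdot-x_{j,n})+r_n^J,\qquad V^j=\text{w-}\lim_n u(t_n,\cdot+x_{j,n}),
\]
with $\limsup_n\|r_n^J\|_{L^{p+1}}\to 0$ as $J\to\infty$ and asymptotic Pythagorean splitting $\|u(t_n)\|_{H^1}^{2}=\sum_{j}\|V^j\|_{H^1}^{2}+\|r_n^J\|_{H^1}^{2}+o(1)$. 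To identify each $V^j$ as an equilibrium, I define $v_n^j(\tau,x):=u(t_n+\tau,x+x_{j,n})$, which is a uniformly bounded sequence of solutions of \eqref{1}; a local compactness argument (Rellich on bounded space-time cylinders, with $p<\lambda(d)$ used to pass to the limit in $|v|^{p-1}v$) produces a subsequential limit $V^j(\tau,x)$ solving \eqref{1}. The inequality $\int_{-T}^{T}\|\partial_\tau V^j\|_{L^2}^{2}\,d\tau\le \liminf_n\int_{-T}^{T}\|u_t(t_n+\tau)\|_{L^2}^{2}\,d\tau=0$ forces $\partial_\tau V^j\equiv 0$, so $V^j$ is time-independent and satisfies $-\Delta V^j+V^j=|V^j|^{p-1}V^j$. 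Finally, Nehari gives $\|V^j\|_{H^1}^{2}=\|V^j\|_{L^{p+1}}^{p+1}\le C\|V^j\|_{H^1}^{p+1}$, so any non-trivial profile satisfies $\|V^j\|_{H^1}\ge c(d,p)>0$, and the Pythagorean splitting bounds $J\le M^{2}/c^{2}<\infty$.

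The main obstacle I anticipate is ensuring that the limit $V^j$ is genuinely defined on all of $\Bbb R$, not just on compact time windows, since only a global-in-time solution can be forced to be stationary by the integrated bound above. This is where the concentration-compact attractor strategy advertised in the abstract enters: rather than extracting nonlinear profiles one at a time by perturbation, one constructs a compact invariant set $\mathcal{A}\subset\mathcal{H}$ (modulo spatial translations) to which $(u(t),u_t(t))$ approaches in Hausdorff distance, shows that every element of $\mathcal{A}$ extends to a global-in-time orbit, and then invokes the global dissipation on $\mathcal{A}$ together with $\|u_t(t)\|_{L^2}\to 0$ to conclude that every such orbit is an equilibrium. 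The finiteness of $J$ and the separation of the centers $\{x_{j,n}\}$ are then automatic from the Nehari lower bound and the orthogonality built into the profile decomposition.
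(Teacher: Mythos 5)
Your proposal takes a genuinely different route from the paper, and it has a gap that the paper's architecture is specifically designed to close.

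The core problem is the \emph{topology of the error term}. G\'erard's translation profile decomposition applied to the bounded sequence $u(t_n)$ in $H^1$ yields profiles $V^j$ and centers $x_{j,n}$ with a remainder $r_n^J$ that vanishes only in subcritical $L^q$ (e.g.\ $L^{p+1}$), together with the Pythagorean expansion $\|u(t_n)\|_{H^1}^2=\sum_j\|V^j\|_{H^1}^2+\|r_n^J\|_{H^1}^2+o(1)$. Nothing in this decomposition forces $\|r_n^J\|_{H^1}\to 0$; in general the $H^1$-mass of the remainder persists. The theorem, however, asserts $u(t_n)=\sum_j Q^j(\cdot-x_{j,n})+o_{H^1}(1)$, i.e.\ the error must vanish in the \emph{energy} norm. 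Your subsequent steps (Rellich on cylinders, identification of profiles as equilibria via the dissipation integral, Nehari lower bound) establish facts about the profiles but never touch the residual $H^1$-mass, so the argument as written proves a decomposition in $L^{p+1}$, not in $H^1$. You flag the ``concentration-compact attractor'' at the end, but you cast its role as ensuring that the limiting profile solutions extend globally in time; that is not the bottleneck. The actual role of the attractor in the paper is to upgrade the weak compactness to genuine $H^1$-precompactness modulo translations: Sections~3--4 prove frequency localization (Lemma~\ref{13}) and spatial localization around $J$ moving centers (Propositions~\ref{33}--\ref{44}), which by Tao's criteria (Propositions~\ref{kkkk}--\ref{pph}) produce a G-precompact set $J(GK)$, $K$ compact in $\mathcal{H}$, attracting the trajectory in the $\mathcal{H}$-metric (Corollary~\ref{hji}). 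Tao's Lemma~B.7 then reads off the decomposition with $o_{\mathcal{H}}(1)$ error directly from this attractor. Only after that does the paper use the perturbation theorem plus $\int_0^\infty\|u_t\|_{L^2}^2\,dt<\infty$ to force the nonlinear profiles $W_j$ to be equilibria, essentially as you propose in spirit.

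Two smaller points. First, your route to uniform continuity of $\|u_t(t)\|_{L^2}^2$ via a naive derivative bound is not clean: $\frac{d}{dt}\|u_t\|_2^2$ involves $\langle\nabla u_t,\nabla u\rangle$, which needs $u_t\in H^1$, a regularity you do not have for $\mathcal{H}$-data. The paper instead obtains $\partial_t u(t_n)=o_{L^2}(1)$ as a byproduct of the decomposition in Section~5 and concludes $u_t\to0$ by contradiction. Second, when you pass to a nonlinear limit $V^j$ on cylinders, you must control the interaction between different traveling profiles before you can say that $v_n^j$ solves \eqref{1} up to errors that vanish; this is exactly what the paper's perturbation theorem (Lemma~2.2), applied to the decoupled initial data, handles. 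In short: the dissipation-forces-equilibria idea is shared, but without the localization lemmas of Section~3 you cannot produce an $H^1$-small remainder, and that is the heart of the theorem.
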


An adaptation of arguments in T. Cazenave \cite{Ca} shows for $1<p<\infty$, when d=1,2, $1<p<\frac{d}{d-2}$, when $d\ge 3$, every global solution has bounded trajectory. Therefore, for this range of $p$, we have the following dichotomy:
\begin{Corollary}\label{lize}
For $1<p<\infty$, when d=1,2, $1<p<\frac{d}{d-2}$, when $d\ge 3$, either the solution to (\ref{1}) blows up at finite time $((i)$ in Theorem 1.1$)$ or decouples into the superposition of equilibriums $((iii)$ in Theorem 1.1$)$.
\end{Corollary}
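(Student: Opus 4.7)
The corollary reduces to Theorem 1.1 once alternative (ii) of the trichotomy is excluded in the stated range of $p$. My plan therefore has two ingredients: the trichotomy of Theorem 1.1 used as a black box, and a separate argument that in the range $1<p<\infty$ (for $d=1,2$) or $1<p<d/(d-2)$ (for $d\ge 3$) every global forward solution of \eqref{1} satisfies
\begin{equation*}
\sup_{t\ge 0}\bigl(\|u(t)\|_{H^1}+\|u_t(t)\|_{L^2}\bigr)<\infty.
\end{equation*}
Once these two are in hand, the combination is immediate: alternative (ii) of the trichotomy cannot occur, so only (i) and (iii) remain, which is exactly the stated dichotomy.

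For the boundedness I would adapt the subcritical argument of Cazenave \cite{Ca} to the damped setting. The starting point is the dissipation identity
\begin{equation*}
\frac{d}{dt}E(u(t),u_t(t))=-2\alpha\|u_t(t)\|_{L^2}^2\le 0,
\end{equation*}
which yields $E(u(t),u_t(t))\le E(u_0,u_1)=:E_0$ for all $t\ge 0$ and, upon expanding the definition of $E$,
\begin{equation*}
\tfrac12\|u(t)\|_{H^1}^2+\tfrac12\|u_t(t)\|_{L^2}^2\le E_0+\tfrac{1}{p+1}\|u(t)\|_{L^{p+1}}^{p+1}.
\end{equation*}
The whole game is then to absorb the $L^{p+1}$ term into the left-hand side. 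Gagliardo--Nirenberg gives $\|u\|_{L^{p+1}}^{p+1}\le C\|u\|_{L^2}^{(p+1)(1-\theta)}\|\nabla u\|_{L^2}^{d(p-1)/2}$ with $\theta=d(p-1)/(2(p+1))$, and whenever the gradient exponent $d(p-1)/2$ is strictly below $2$ Young's inequality absorbs the gradient factor into the left-hand side, modulo an additive constant that depends only on $E_0$ and on $\|u(t)\|_{L^2}$.

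The auxiliary $L^2$ control can be obtained by pairing the equation with $u$ and integrating in time; this produces a differential inequality for $y(t):=\|u(t)\|_{L^2}^2$ whose right-hand side is bounded in terms of $\|u_t\|_{L^2}^2$, $\|u\|_{H^1}^2$ and $\|u\|_{L^{p+1}}^{p+1}$, and the global-in-time dissipation bound $\int_0^{\infty}\|u_t\|_{L^2}^2\,dt\le (2\alpha)^{-1}(E_0-\inf_t E(u,u_t))<\infty$ prevents runaway growth of $y$. Feeding this back into the absorption argument closes the bootstrap and yields the uniform $\mathcal H$-bound on the trajectory.

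The main obstacle is that the pure Young-absorption argument requires $d(p-1)/2<2$, i.e.\ $p<1+4/d$, and in dimension $d=3$ this stops short of the corollary's upper endpoint $p=d/(d-2)=3$. In the intermediate range $7/3\le p<3$ one has to reproduce Cazenave's more delicate subcritical scheme: argue by contradiction, supposing $\|u(t_n)\|_{H^1}\to\infty$ along some sequence, rescale the profile at time $t_n$, and exploit the strict energy-subcriticality $p<(d+2)/(d-2)$ together with the finite total dissipation $\int_0^\infty\|u_t\|_{L^2}^2\,dt<\infty$ to reach a contradiction. Once the trajectory bound is secured in the full range, Theorem 1.1 delivers the dichotomy of the corollary.
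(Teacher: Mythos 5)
Your strategy coincides with the paper's: the paper also obtains the corollary by invoking an adaptation of Cazenave's boundedness argument \cite{Ca} to rule out alternative (ii) of Theorem 1.1, then reads off the dichotomy. One imprecision in your elaboration: the Young-absorption threshold $p<1+4/d$ is binding in every dimension, not only $d=3$ --- in $d=1$ it yields only $p<5$ and in $d=2$ only $p<3$, whereas the corollary asserts the full range $p<\infty$ there --- so the ``more delicate'' Cazenave scheme you defer to must carry those cases as well as the $d=3$ window $7/3\le p<3$; but since both you and the paper leave that heavier lifting to a citation of \cite{Ca}, this does not change the verdict.
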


\begin{remark}\label{lize2}
For $3\le d\le 6$, $1<p<\frac{d}{d-2}$, the proof in E. Feireisl \cite{F} is sufficient to give Corollary \ref{lize}. Thus, in this paper, when $d\ge 3$, we always assume $p\ge \frac{d}{d-2}$.
\end{remark}

In order to describe our proof, the following notions are needed:
\begin{Definition}
Given any $h\in \Bbb R^d$, let $\tau_h:\mathcal{H}\to\mathcal{H}$ be the shift operator $\tau_h f(x)=f(x-h)$, and we denote the translation group by $G=\{\tau_h:h\in\Bbb R^d\}$. Given any $K\subseteq\mathcal{H},$ we denote the orbit of $K$ by $GK=\{gf:g\in G, f\in K\}$. If $GK=K$, then we call $K$ G-invariant. Suppose that $J\ge0$ is an integer, we let
$$JK \equiv \left\{ {{f_1} + ... + {f_J}:{f_1},{f_2},...,{f_J} \in K} \right\}.$$
We say $E\subseteq\mathcal{H}$ is G-precompact with $J$ components if $E\subseteq J(GK)$ for some compact $K\subseteq\mathcal{H}$ and $J\ge1$.
\end{Definition}
Our proof is divided into three parts. In the first step, we prove the trajectory of $u(t)$ is attracted by a G-precompact set with $J$ components, namely the existence of concentration-compact attractor. The key ingredient in this step is frequency localization and spatial localisation. The idea of ``concentration compact" attractor was introduced by T. Tao \cite{TT}. In the second step, for any sequence going to infinity, we prove up to a subsequence there exist a finite number profiles. Then by applying perturbation theorem, we obtain a nonlinear profile decomposition. Using damping effect of (\ref{1}), we can show all the profiles are exactly equilibriums. Finally we prove the convergence for all time.

Our paper is organized as follows: In Section 2, we recall some preliminaries, such as Strichartz estimates, local wellposedness, perturbation theorem. In Section 3, we prove the frequency localization and spatial localization. In Section 4, we prove the existence of concentration-compact attractor.  In Section 5, we extract the profiles and finish our proof by using damping.

\vskip 0.2in

\noindent{\bf Notation and Preliminaries  }
We will use the notation $X\lesssim Y$ whenever there exists some positive constant $C$ so that $X\le C Y$. Similarly, we will use $X\sim Y$ if $X\lesssim Y \lesssim X$.
 We define the Fourier transform on $\mathbb{R}^d$ to be $$F(f)(\xi) =\tfrac1{(2\pi)^d} \int_{\mathbb{R}^d} e^{- ix\cdot\xi} f(x)\,\mathrm{d}x,$$
 $P_N$ is the usual Littlewood-Paley decomposition operator with frequency truncated in $N$. Similarly, we use $P_{\le N}$ and $P_{\ge N}$. Sometimes, we denote $P_{<\mu}u$ by $u_{<\mu}$. $\|u(t)\|_\mathcal{H}$ means $\|(u(t),\partial_t u(t))\|_{\mathcal{H}}$.
All the constants are denoted by $C$ and they can change from line to line.

\section{Preliminaries}
As explained in Remark \ref{lize2}, we only need to consider 
$$\left\{ \begin{array}{l}
 1 < p < \infty ,\mbox{  }\mbox{  }\mbox{  }\mbox{  }\mbox{  }\mbox{  }\mbox{  }\mbox{  }\mbox{  }\mbox{  }d = 1,2 \\
 \frac{d}{{d - 2}} \le p < 1 + \frac{4}{{d - 2}},d = 3,4.
 \end{array} \right.
$$

In this section we give the Strichartz estimates, local wellposedness and perturbation theorem, we closely follow notations in \cite{BRS}.
Consider the linear equation,
\begin{align}\label{2.1}
u_{tt}+2\alpha u_t-\Delta u+u=G, \mbox{  }\big(u(0,x),u_t(0,x)\big)=(u_0,u_1)\in \mathcal{H},
\end{align}
then by Duhamel principle,
\begin{align*}
 u(t) &= {e^{ - \alpha t}}\left[ {\cos (t\sqrt { - \Delta  + 1 - {\alpha ^2}} ) + \alpha \frac{{\sin (t\sqrt { - \Delta  + 1 - {\alpha ^2}} )}}{{\sqrt { - \Delta  + 1 - {\alpha ^2}} }}} \right]{u_0} \\
  &+ {e^{ - \alpha t}}\frac{{\sin (t\sqrt { - \Delta  + 1 - {\alpha ^2}} )}}{{\sqrt { - \Delta  + 1 - {\alpha ^2}} }}{u_1} + \int_0^t {\frac{{\sin ((t - s)\sqrt { - \Delta  + 1 - {\alpha ^2}} )}}{{\sqrt { - \Delta  + 1 - {\alpha ^2}} }}} {e^{ - \alpha (t - s)}}G(s)ds \\
  &\triangleq {S_{1,\alpha}}(t){u_0} + {S_{2,\alpha}}(t){u_1} + \int_0^t {{S_{2,\alpha}}} (t - s)G(s)ds.
 \end{align*}
Define $S_L(t)(u_0,u_1)=S_{1,\alpha}u_0+S_{2,\alpha}u_1$.

The Strichartz estimates are given by the following lemma. We emphasize that since we only need local Strichartz estimates in this paper, it is possible to get $L^p_t(I;L^q_x)$ estimates for non-admissible pair by H\"older inequality (see (\ref{wang}) below). 
\begin{Lemma}(\cite{BRS})
Let $t_0>T>0$, $u$ be a solution to (\ref{2.1}) on $[t_0-T,t_0+T]\times\Bbb R^d$. For $d\ge3$, $\theta^*=\frac{d+2}{d-2}$, we have
$$
\mathop {\sup }\limits_{t \in [-T,T]} {\left\| {(u(t),\partial_tu(t))} \right\|_{\mathcal{H}}} + {\left\| u \right\|_{L_t^{{\theta ^ * }}([-T,T];L_x^{2{\theta ^ * }})}} \lesssim e^{T\alpha}\left[ {{{\left\| {({u_0},{u_1})} \right\|}_\mathcal{H}} + \int_{-T}^T {{{\left\| {G(s)} \right\|}_2}ds} } \right].
$$
If $d=3,4$, then
$${\left\| u \right\|_{L_t^{\frac{4}{{d - 2}}}([ - T,T];L_x^{\frac{{4d}}{{d - 2}}})}} \lesssim {e^{T\alpha }}\left[ {{{\left\| {({u_0},{u_1})} \right\|}_{\cal H}} + \int_{ - T}^T {{{\left\| {G(s)} \right\|}_2}ds} } \right].
$$
If $d=1,2$, it holds that
$$\mathop {\sup }\limits_{t \in [-T,T]} {\left\| {(u(t),\partial_tu(t))} \right\|_{\mathcal{H}}} \lesssim e^{T\alpha}\left[ {{{\left\| {({u_0},{u_1})} \right\|}_\mathcal{H}} + \int_{-T}^T {{{\left\| {G(s)} \right\|}_2}ds} } \right].
$$
Define $Q=p+1$, when $d=1,2$, $p<Q<\frac{2p}{p(d-2)-d}$, when $3\le d\le 4$, $\frac{d}{d-2}\le p<1+\frac{4}{d-2}$, we have
\begin{align}\label{wang}
{\left\| u \right\|_{L_t^Q([ - T,T];L_x^{2p})}}\lesssim C(T)\left[ {{{\left\| {({u_0},{u_1})} \right\|}_{\cal H}} + \int_{ - T}^T {{{\left\| {G(s)} \right\|}_2}ds} } \right].
\end{align}
\end{Lemma}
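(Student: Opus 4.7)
The plan is to reduce to undamped Klein--Gordon Strichartz estimates by conjugating out the exponential damping. The Duhamel representation displayed in the excerpt already makes this explicit: each propagator carries an overall factor $e^{-\alpha t}$ (or $e^{-\alpha(t-s)}$ for the inhomogeneous piece), while the spatial symbol is the shifted $\sqrt{-\Delta + 1 - \alpha^2}$. Equivalently, setting $v(t,x) = e^{\alpha t}u(t,x)$, a direct computation gives
\begin{equation*}
v_{tt} - \Delta v + (1-\alpha^{2})v = e^{\alpha t}G, \qquad v(0)=u_{0},\quad v_{t}(0)=u_{1}+\alpha u_{0},
\end{equation*}
so $v$ solves a standard linear Klein--Gordon equation. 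The potentially wrong-sign mass term $1-\alpha^{2}$ is harmless: on the finite window $[-T,T]$ one absorbs $-\alpha^{2}v$ into the source and closes by Gronwall at cost of a constant depending on $T$ and $\alpha$.

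Next I would invoke the classical Klein--Gordon/wave Strichartz estimates for $v$. For $d\ge 3$ the pair $(\theta^{*},2\theta^{*})$ with $\theta^{*}=(d+2)/(d-2)$ is the Sobolev-corrected admissible pair, and for $d=3,4$ the pair $(4/(d-2),4d/(d-2))$ is the familiar sharp wave-type admissible pair; for $d=1,2$ only the energy estimate is asserted, and this follows from the standard multiplier identity applied to $v$. Combining these inputs yields
\begin{equation*}
\sup_{t\in[-T,T]}\|(v,\partial_{t}v)\|_{\mathcal{H}} + \|v\|_{L^{q}_{t}L^{r}_{x}} \lesssim \|(u_{0},u_{1})\|_{\mathcal{H}} + \int_{-T}^{T}\|e^{\alpha s}G(s)\|_{2}\,ds.
\end{equation*}
Reverting via $u=e^{-\alpha t}v$ and bounding $e^{\alpha s}\le e^{T\alpha}$ uniformly on the window produces exactly the stated prefactor $e^{T\alpha}$ in front of both the data norm and the source integral.

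For the non-admissible bound \eqref{wang} I would interpolate the spatial norm $L^{2p}_{x}$ against the available admissible norms and then trade integrability in $t$ via H\"older on the bounded interval $[-T,T]$. Concretely, for $d=3,4$ one writes $L^{2p}_{x}$ as an interpolation between $L^{2}_{x}$ (controlled pointwise by the energy piece already established) and $L^{4d/(d-2)}_{x}$ (controlled by the admissible estimate), obtains a pointwise-in-$t$ bound in a mixed norm, and then uses H\"older in time with finite interval length $2T$ to upgrade to $L^{Q}_{t}$; for $d=1,2$ the Sobolev embedding $H^{1}\hookrightarrow L^{2p}_{x}$ replaces the spatial interpolation and the rest is identical. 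The main, though routine, obstacle is the exponent bookkeeping: one must check that the range $p<Q<2p/(p(d-2)-d)$ is exactly what the interpolation and H\"older exponents allow, which I would verify by solving the scaling identities explicitly.
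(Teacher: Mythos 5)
The paper states this lemma as a citation to [BRS] and gives no proof; the only indication of the intended route is the preceding remark that non-admissible local bounds follow from H\"older's inequality in time, so the comparison below is against a correct execution of that route rather than against a written argument in the paper.

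Your conjugation $v=e^{\alpha t}u$ and the subsequent reduction to undamped Klein--Gordon Strichartz estimates on $[-T,T]$ are sound, and the displayed bounds for the admissible norms (the $L^\infty_t\mathcal{H}$, the $(\theta^*,2\theta^*)$ and the $(\tfrac{4}{d-2},\tfrac{4d}{d-2})$ pairs) do follow this way. However, there is a genuine gap in the argument you outline for the non-admissible bound \eqref{wang}. You propose interpolating $L^{2p}_x$ between $L^2_x$ (held fixed in $t$ via the energy) and $L^{4d/(d-2)}_x$ (via the admissible estimate), and then trading time integrability by H\"older. Carrying this out, the interpolation weight solves $\tfrac{1}{2p}=\tfrac{1-\theta}{2}+\tfrac{\theta(d-2)}{4d}$, so $\theta=\tfrac{2d(p-1)}{p(d+2)}$; after H\"older in time on $[-T,T]$ the argument closes only when $Q\theta\le\tfrac{4}{d-2}$, i.e.\ $Q\le\tfrac{2p(d+2)}{d(p-1)(d-2)}$. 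This threshold is strictly smaller than the stated $\tfrac{2p}{p(d-2)-d}$ throughout the range $\tfrac{d}{d-2}\le p<1+\tfrac{4}{d-2}$ (indeed they coincide only when $p=\tfrac{2d}{d-2}$, which lies outside that range for $d=3,4$). Concretely, for $d=3$, $p=4$ your interpolation gives $Q\le 40/9\approx 4.44$, whereas the lemma asserts $Q<8$. So the bookkeeping you defer does not in fact close.

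The fix is to interpolate against the full strength of the energy, namely $L^\infty_t L^{2^*}_x$ with $2^*=\tfrac{2d}{d-2}$, which the energy piece gives pointwise in $t$ via $H^1\hookrightarrow L^{2^*}$. Since $2^*\le 2p\le\tfrac{4d}{d-2}$ in the relevant range, write $\tfrac{1}{2p}=\tfrac{(1-\theta)(d-2)}{2d}+\tfrac{\theta(d-2)}{4d}$, i.e.\ $\theta=\tfrac{2\bigl(p(d-2)-d\bigr)}{p(d-2)}$; now the condition $Q\theta\le\tfrac{4}{d-2}$ becomes exactly $Q\le\tfrac{2p}{p(d-2)-d}$, which matches \eqref{wang}. (Equivalently, one may observe that $\bigl(\tfrac{2p}{p(d-2)-d},\,2p\bigr)$ is itself a wave-admissible pair at the $\dot H^1$ scaling for $d=3,4$ in this range of $p$, invoke the corresponding Strichartz estimate directly, and then lose time integrability by H\"older on the bounded interval.) The $d=1,2$ case you sketch via Sobolev embedding is fine as written.
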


As a corollary of Strichartz estimates, we have the perturbation theorem.
\begin{Lemma}
Let $M>0$. there exists $\varepsilon_0=\varepsilon_0(M)$ satisfying the following: Let $I\subset \Bbb R^+$ is a finite interval containing $t_0$, $\tilde{u}$ is defined on $I\times\Bbb R^d$, and satisfies
$$
\mathop {\sup }\limits_{t \in I} {\left\| {(\tilde{u},{\partial _t}\tilde{u})(t)} \right\|_\mathcal{H}}\le M.
$$
Suppose that $v$ is a solution to (\ref{1}) with initial data $(v(t_0),\partial_tv(t_0))$ at time $t_0$. Let $\varepsilon\in (0,\varepsilon_0)$, suppose that
\begin{align*}
\partial_{tt}\tilde{u}+2\alpha \partial_t \tilde{u}-\Delta \tilde{u} +\tilde{u}-|\tilde u|^{p-1}\tilde{u}&=e,\\
\|e\|_{L^1_t(I;L^2_x)}+\|S_{1,\alpha}(t-t_0)(\tilde{u}-v)(t_0)\|_{L^{Q}_t(I;L^{2p}_x)}+\|S_{2,\alpha}(t-t_0)
(\partial_t\tilde{u}-\partial_tv)(t_0)\|_{L^{Q}_t(I;L^{2p}_x)}&\le \varepsilon.
\end{align*}
Then
$$\|\tilde u-v-S_L(t-t_0)(\tilde{u}-v)(t_0)\|_{L^{\infty}_t(I;\mathcal{H})}+\|\tilde u-v\|_{L^{Q}_t(I;L^{2p}_x)}\le C(M)\varepsilon.
$$
\end{Lemma}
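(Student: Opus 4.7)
The plan is a standard continuity/bootstrap argument driven by the Strichartz estimates of Lemma 2.1. Set $w = \tilde u - v$ and $r(t) = w(t) - S_L(t - t_0)(\tilde u - v)(t_0)$; writing $F(z) = |z|^{p-1}z$, the difference $w$ solves
\begin{equation*}
\partial_{tt} w + 2\alpha \partial_t w - \Delta w + w = F(\tilde u) - F(v) + e,
\end{equation*}
with initial data $(\tilde u - v, \partial_t \tilde u - \partial_t v)(t_0)$, so via Duhamel the conclusion reduces to bounding $\|w\|_{L^Q_t(I; L^{2p}_x)} + \|r\|_{L^\infty_t(I; \mathcal{H})}$ by a multiple of $\varepsilon$.

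First, a short-time iteration of Lemma 2.1 applied to $\tilde u$ itself (treating $F(\tilde u) + e$ as forcing and using $\|\tilde u\|_{L^\infty_t \mathcal{H}} \le M$ together with smallness of $\|e\|_{L^1 L^2}$) gives an a priori bound $\|\tilde u\|_{L^Q(I; L^{2p}_x)} \le C_1(M, |I|)$. This permits a partition $I = \bigcup_{k=0}^{N-1} I_k$, $I_k = [\tau_k, \tau_{k+1}]$, $\tau_0 = t_0$, with $N = N(M, |I|, \eta)$ chosen so that $\|\tilde u\|_{L^Q(I_k; L^{2p}_x)} \le \eta$ for a small parameter $\eta$ to be fixed. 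On each subinterval Lemma 2.1 applied to $w$ starting from $\tau_k$ yields
\begin{equation*}
\|w\|_{L^Q(I_k; L^{2p}_x)} + \|w - S_L(\cdot - \tau_k) w(\tau_k)\|_{L^\infty(I_k; \mathcal{H})} \le C_0 \|S_L(\cdot - \tau_k) w(\tau_k)\|_{L^Q(I_k; L^{2p}_x)} + C_0 \|F(\tilde u) - F(v)\|_{L^1(I_k; L^2_x)} + C_0 \varepsilon.
\end{equation*}
The nonlinear difference is controlled via $|F(\tilde u) - F(v)| \lesssim (|\tilde u|^{p-1} + |v|^{p-1})|w|$ together with H\"older in $x$ at exponent $2p$ and in $t$ (admissible because $Q > p$), producing a bound $C\bigl(\|\tilde u\|^{p-1}_{L^Q(I_k; L^{2p})} + \|v\|^{p-1}_{L^Q(I_k; L^{2p})}\bigr) |I|^{(Q-p)/Q} \|w\|_{L^Q(I_k; L^{2p})}$. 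Under the continuity hypothesis $\|v\|_{L^Q(I_k; L^{2p})} \le 2\eta$, choosing $\eta$ so small (depending on $M, |I|$) that $C_0 C (3\eta)^{p-1} |I|^{(Q-p)/Q} \le 1/2$ lets me absorb the nonlinear contribution into the left-hand side.

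To propagate the estimate across subintervals, I compare the linear term at each restart to the linear evolution from $t_0$:
\begin{equation*}
\|S_L(\cdot - \tau_k) w(\tau_k)\|_{L^Q(I_k; L^{2p})} \le \|S_L(\cdot - t_0)(\tilde u - v)(t_0)\|_{L^Q(I_k; L^{2p})} + C(|I|) \|r(\tau_k)\|_{\mathcal{H}} \le \varepsilon + C(|I|) \|r\|_{L^\infty([t_0, \tau_k]; \mathcal{H})},
\end{equation*}
by applying the linear part of Lemma 2.1 to $r(\tau_k) = w(\tau_k) - S_L(\tau_k - t_0)(\tilde u - v)(t_0)$. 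Induction on $k$ then produces $\|w\|_{L^Q(I_k; L^{2p})} + \|r\|_{L^\infty(I_k; \mathcal{H})} \le (2C_0 C(|I|))^{k+1} \varepsilon$; summing over the $N = N(M, |I|)$ subintervals yields a total bound $\le C(M)\varepsilon$, and taking $\varepsilon_0$ small enough that $C(M)\varepsilon_0 < \eta$ validates the continuity hypothesis on $v$ via $\|v\|_{L^Q L^{2p}} \le \|\tilde u\|_{L^Q L^{2p}} + \|w\|_{L^Q L^{2p}} \le \eta + C(M)\varepsilon_0 < 2\eta$.

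The main obstacle is the simultaneous bookkeeping of the bootstrap: on each subinterval one has to preserve the smallness of $\|v\|_{L^Q L^{2p}}$ required for the nonlinear absorption and, in parallel, inductively transfer the hypothesis on the linear evolution $S_L(\cdot - t_0)(\tilde u - v)(t_0)$ from the global origin $t_0$ to each intermediate starting point $\tau_k$, paying an $\mathcal{H}$-norm of the accumulated error $r$. Since $N$ depends only on $M$ (and $|I|$), the iteration closes with a finite final constant.
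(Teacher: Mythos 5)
The paper states Lemma 2.2 without proof, simply as a corollary of the Strichartz estimates of Lemma 2.1; your bootstrap/continuity argument is the standard way to supply the missing proof and it is correct. You track the right quantities: the $L^Q_tL^{2p}_x$ a priori bound on $\tilde u$ obtained by a first short-time iteration (which is needed because for $p>\tfrac{d}{d-2}$ one cannot bound $\||\tilde u|^p\|_{L^2_x}$ directly by Sobolev from the $\mathcal H$ bound), the subdivision making $\|\tilde u\|_{L^Q(I_k;L^{2p})}\le\eta$, the H\"older gain $|I_k|^{(Q-p)/Q}$ in time (valid since $Q>p$ throughout the paper's range of exponents), the absorption of the nonlinear difference term for small $\eta$, and the inductive transfer of the linear data from $t_0$ to each restart time $\tau_k$ by applying the homogeneous Strichartz bound to $r(\tau_k)$ and using the group property of the damped linear flow. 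The geometric growth $\sim C^{k}$ over $N=N(M,|I|)$ subintervals closes the estimate. The only caveat worth flagging is that, as written, your $\varepsilon_0$ and the final constant depend on $|I|$ as well as $M$; this is inherent in the local-in-time Strichartz estimates (which carry factors $e^{T\alpha}$, $C(T)$), and the paper's statement $C(M)$ should really be read as $C(M,|I|)$, consistent with every place the lemma is invoked (always on an interval of length controlled by the energy or by $\mu_1^{-1}$). With that understanding your argument is complete.
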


Combining T. Cazenave \cite{Ca} and N. Burq, G. Raugel, W. Schlag \cite{BRS}, we obtain the local wellposedness theorem as follows:
\begin{Proposition}\label{local}
For $(u_0,u_1)\in \mathcal{H}$, there exists $T>0$ such that (\ref{1}) is well-defined in $[0,T)$, with $T$ depending on $\|(u_0,u_1)\|_{\mathcal{H}}$. Furthermore, if $\|(u_0,u_1)\|_{ \mathcal{H}}<\epsilon$ with $\epsilon$ sufficiently small, then there exits $\gamma>0$ such that
$$\|(u(t),\partial_tu(t))\|_{\mathcal{H}}\le Ce^{-\gamma t}\|(u_0,u_1)\|_{\mathcal{H}}.$$ Moreover, if the solution $u(t)$ is globally defined, then we have
$$
\int^{\infty}_0\|\partial_tu(s)\|_2^2ds<\infty.
$$
\end{Proposition}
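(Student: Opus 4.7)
The plan is to prove the three assertions in sequence, each relying on a standard ingredient already furnished by the preceding lemmas.

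For the local wellposedness, I would set up a contraction mapping in the resolution space $X_T=C([0,T];\mathcal{H})\cap L^Q_t([0,T];L^{2p}_x)$ equipped with the norm $\sup_{t\in[0,T]}\|(u,\partial_t u)\|_\mathcal{H}+\|u\|_{L^Q_t L^{2p}_x}$, working with the Duhamel representation
$$u(t)=S_{1,\alpha}(t)u_0+S_{2,\alpha}(t)u_1+\int_0^t S_{2,\alpha}(t-s)|u(s)|^{p-1}u(s)\,ds.$$
The linear Strichartz estimates of Lemma~2.1, including \eqref{wang}, control the free evolution in both pieces of the norm by $C(T)\|(u_0,u_1)\|_\mathcal{H}$. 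For the Duhamel term I would apply H\"older in time, writing $\||u|^{p-1}u\|_{L^1_t L^2_x}\lesssim T^{1-p/Q}\|u\|_{L^Q_t L^{2p}_x}^p$, which is legitimate since one checks $Q>p$ in every allowed range of $(d,p)$. Closing the contraction on a ball of radius $2C(T)\|(u_0,u_1)\|_\mathcal{H}$ then forces $T$ to be chosen small in terms of $\|(u_0,u_1)\|_\mathcal{H}$, giving the first assertion.

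For the small-data exponential decay, the key input is that the linear propagator decays in $\mathcal{H}$: from the explicit formulas for $S_{1,\alpha}, S_{2,\alpha}$ one shows $\|S_L(t)(u_0,u_1)\|_\mathcal{H}\le C e^{-\gamma_0 t}\|(u_0,u_1)\|_\mathcal{H}$ for some $\gamma_0=\gamma_0(\alpha)>0$. The analysis splits frequencies according to whether $1+|\xi|^2$ exceeds or lies below $\alpha^2$; on the first piece the prefactor $e^{-\alpha t}$ dominates, while on the (compact) second piece the hyperbolic functions $\cosh(t\sqrt{\alpha^2-1-|\xi|^2})$ combine with $e^{-\alpha t}$ to yield decay at rate $\alpha-\sqrt{\max(0,\alpha^2-1)}>0$; this frequency-by-frequency check is the main technical point here. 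Given this, a standard continuity/bootstrap argument on the Duhamel formula closes: assuming $\|(u,\partial_t u)(t)\|_\mathcal{H}\le 2Ce^{-\gamma t}\|(u_0,u_1)\|_\mathcal{H}$ with $\gamma<\gamma_0$, the nonlinear term contributes $\lesssim \|(u_0,u_1)\|_\mathcal{H}^p\int_0^t e^{-\gamma_0(t-s)}e^{-p\gamma s}\,ds$, which is absorbable when $\|(u_0,u_1)\|_\mathcal{H}$ is taken below a threshold $\varepsilon=\varepsilon(\alpha)$ since $p>1$.

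Finally, the integrability of $\|\partial_t u\|_2^2$ for global solutions comes from the dissipation identity. Multiplying \eqref{1} by $\partial_t u$ and integrating by parts,
$$\frac{d}{dt}E(u,\partial_t u)=-2\alpha\|\partial_t u(t)\|_2^2,$$
so for any $T>0$,
$$2\alpha\int_0^T\|\partial_t u(s)\|_2^2\,ds=E(u_0,u_1)-E(u(T),\partial_t u(T)).$$
Since $E$ is monotone decreasing, it suffices to bound $E(u(T),\partial_t u(T))$ from below uniformly in $T$. Writing $E=\tfrac12\|(u,\partial_t u)\|_\mathcal{H}^2-\tfrac{1}{p+1}\|u\|_{p+1}^{p+1}$ and invoking Sobolev embedding in the admissible range for $p$, this reduces to boundedness of the trajectory in $\mathcal{H}$; this is precisely the content of the Cazenave/Feireisl arguments cited in the introduction, and in the generality stated here is the one place where the global-existence hypothesis is genuinely used. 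The main obstacle of the proposition as a whole is the verification of the uniform exponential linear decay claimed in step two; the other two steps are straightforward once the Strichartz and energy framework of Section~2 is in place.
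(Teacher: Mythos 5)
The paper does not give a proof of this proposition; it is cited as following from Cazenave \cite{Ca} and Burq--Raugel--Schlag \cite{BRS}. Your reconstruction follows the natural route (contraction in the Strichartz space, linear exponential decay plus bootstrap, energy dissipation identity) and is correct in outline, so there is no "paper proof" to contrast with. There is, however, a real gap in your second step that should be repaired.

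In the bootstrap for small-data decay you bound the Duhamel term by
$\|(u_0,u_1)\|_\mathcal{H}^p\int_0^t e^{-\gamma_0(t-s)}e^{-p\gamma s}\,ds$. This presupposes the pointwise-in-time estimate $\||u(s)|^{p-1}u(s)\|_{2}=\|u(s)\|_{2p}^p\lesssim\|u(s)\|_{H^1}^p$, i.e.\ the Sobolev embedding $H^1\hookrightarrow L^{2p}$. By Remark~\ref{lize2} the relevant range for $d=3,4$ is $\frac{d}{d-2}\le p<1+\frac{4}{d-2}$, where $2p>\frac{2d}{d-2}$ and this embedding is false. So precisely in the regime the paper cares about, the bound you wrote down does not follow from the inductive hypothesis $\|(u,\partial_t u)(s)\|_\mathcal{H}\le 2Ce^{-\gamma s}\|(u_0,u_1)\|_\mathcal{H}$. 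The repair is the same device you already used for local wellposedness: the contraction space must include $L^Q_t L^{2p}_x$, and the bootstrap quantity should be an exponentially weighted Strichartz norm (or, equivalently, iterate the local Strichartz estimate \eqref{wang} over intervals of fixed length, using $\|u\|_{L^Q_t L^{2p}_x([kT_0,(k+1)T_0])}\lesssim\|(u,\partial_t u)(kT_0)\|_\mathcal{H}$ from the local theory and $\||u|^{p-1}u\|_{L^1_t L^2_x}\lesssim\|u\|_{L^Q_t L^{2p}_x}^p$ to control the forcing). Chopping time into blocks of length $T_0$ chosen so that $Ce^{-\gamma_0 T_0}<1/2$ then yields the geometric decay of $\|(u,\partial_t u)(kT_0)\|_\mathcal{H}$ and closes the argument without any supercritical Sobolev embedding.

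Two smaller remarks. First, the linear decay computation is fine, but note that the "compact second piece" $\{1+|\xi|^2<\alpha^2\}$ only exists when $\alpha>1$; for $\alpha\le 1$ the operator $-\Delta+1-\alpha^2$ is nonnegative and the whole argument reduces to the first piece with rate $\alpha$. It is worth also handling the borderline $|\xi|^2=\alpha^2-1$, where the propagator degenerates to $te^{-\alpha t}$ but still decays at any rate below $\alpha$. Second, in the dissipation step you reduce to a lower bound on $E(u(T),\partial_t u(T))$ and then invoke Cazenave/Feireisl for boundedness of the trajectory; but those references only cover $p<\frac{d}{d-2}$ (Cazenave) or $p<1+\min(\frac{4}{d},\frac{d}{d-2})$ (Feireisl), which by Remark~\ref{lize2} is precisely the range the paper has already discarded. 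In the range that remains, global solutions with unbounded trajectory are allowed (Theorem~1.1(ii)), so boundedness is not automatic. What one actually needs, and all that the energy identity requires, is $\inf_T E(u(T),\partial_t u(T))>-\infty$ for global solutions; in the paper this is only ever used under the standing assumption $\sup_t\|(u,\partial_t u)(t)\|_\mathcal{H}\le E$ of Section~3, where your Sobolev argument does give the lower bound, so the intended content is fine — but the citation as written does not support the statement in the proposition's full generality.
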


\section{Frequency localization and Spatial localization}
Since we focus on bounded solution throughout the paper, we assume
$$\mathop {\sup }\limits_{t \in [0,\infty)} {\left\| {(u,{\partial _t}u)(t)} \right\|_\mathcal{H}} \le E.$$
In the first step, we prove the localization of frequency, namely
\begin{Lemma}\label{13}
For any $\mu_0>0$ there exists $c(\mu_0)>0$ depending on $E$ such that
\begin{align}
&\mathop {\lim \sup }\limits_{t \to \infty } \|P_{\ge \frac{1}{c(\mu_0)}}u(t)\|_{H^1}\le \mu_0, \label{345}\\
&\mathop {\lim \sup }\limits_{t \to \infty } \|P_{\ge \frac{1}{c(\mu_0)}}\partial_tu(t)\|_{L^2}\le \mu_0. \label{kj}
\end{align}
\end{Lemma}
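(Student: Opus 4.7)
The plan is to apply the projector $P_{\geq N}$ to equation~(\ref{1}), use the exponential decay of the damped linear semigroup via Duhamel, and then reduce the lemma to proving smallness of the high-frequency part of the nonlinearity. Since $P_{\geq N}$ commutes with $\partial_{tt}$, $\partial_{t}$, $\Delta$ and the identity, the function $v_{N}:=P_{\geq N}u$ satisfies
\begin{equation*}
\partial_{tt}v_{N}+2\alpha\,\partial_{t}v_{N}-\Delta v_{N}+v_{N}=P_{\geq N}\bigl(|u|^{p-1}u\bigr),
\end{equation*}
with initial data $(v_{N}(t_{0}),\partial_{t}v_{N}(t_{0}))\in\mathcal{H}$. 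The explicit formulas for $S_{1,\alpha}$ and $S_{2,\alpha}$ in Section~2 show that each carries an $e^{-\alpha t}$ prefactor, and the multiplier $\frac{\sin(t\sqrt{-\Delta+1-\alpha^{2}})}{\sqrt{-\Delta+1-\alpha^{2}}}$ maps $L^{2}$ into $H^{1}$ with bounded operator norm, so $\|S_{L}(t)\|_{\mathcal{H}\to\mathcal{H}}\lesssim e^{-\alpha t}$. Duhamel then gives
\begin{equation*}
\bigl\|(v_{N},\partial_{t}v_{N})(t)\bigr\|_{\mathcal{H}}\lesssim e^{-\alpha(t-t_{0})}\,\bigl\|(v_{N},\partial_{t}v_{N})(t_{0})\bigr\|_{\mathcal{H}}+\int_{t_{0}}^{t}e^{-\alpha(t-s)}\,\bigl\|P_{\geq N}(|u|^{p-1}u)(s)\bigr\|_{L^{2}}\,ds,
\end{equation*}
and sending $t\to\infty$ yields
\begin{equation*}
\limsup_{t\to\infty}\,\bigl\|(v_{N},\partial_{t}v_{N})(t)\bigr\|_{\mathcal{H}}\lesssim\alpha^{-1}\sup_{s\geq t_{0}}\,\bigl\|P_{\geq N}(|u|^{p-1}u)(s)\bigr\|_{L^{2}}.
\end{equation*}
Hence both (\ref{345}) and (\ref{kj}) will follow at once if, for every $\mu_{0}>0$, the threshold $N=1/c(\mu_{0})$ can be chosen so that the right-hand side is $\lesssim\mu_{0}$.

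To produce this smallness I would insert an intermediate threshold $M\ll N$ and split
\begin{equation*}
|u|^{p-1}u=\bigl[|u|^{p-1}u-|u_{<M}|^{p-1}u_{<M}\bigr]+|u_{<M}|^{p-1}u_{<M}.
\end{equation*}
The first bracket is controlled by the elementary pointwise inequality $|F(a)-F(b)|\lesssim(|a|^{p-1}+|b|^{p-1})|a-b|$ with $F(u)=|u|^{p-1}u$, together with H\"older and the Sobolev embedding $H^{1}\hookrightarrow L^{2d/(d-2)}$, giving $\|F(u)-F(u_{<M})\|_{L^{2}}\lesssim E^{p-1}\,\|u_{\geq M}\|_{H^{1}}$ throughout the range $p<\lambda(d)$. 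For the second piece, Bernstein's inequality bounds $u_{<M}$ in $L^{\infty}$ with at worst a polynomial loss in $M$, and a fractional chain rule then yields $\bigl\|P_{\geq N}(|u_{<M}|^{p-1}u_{<M})\bigr\|_{L^{2}}\lesssim (M/N)^{\sigma}\,C(E)$ for some $\sigma=\sigma(d,p)>0$; taking $N$ a suitable power of $M$ makes this term negligible. What remains is to show $\sup_{t}\|u_{\geq M}(t)\|_{H^{1}}\to 0$ as $M\to\infty$.

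The main obstacle is precisely this uniform-in-$t$ frequency tightness of $\{u(t):t\geq 0\}$ in $H^{1}$. The a priori bound $u\in L^{\infty}_{t}H^{1}_{x}$ alone does not supply it, and in the critical-growth range $p\geq d/(d-2)$ no direct Sobolev-gain on $F(u)$ is available. To close the estimate I would replace the naive Duhamel bound by a modified-energy inequality
\begin{equation*}
\frac{d}{dt}\mathcal{E}_{N}(t)+c\,\mathcal{E}_{N}(t)\lesssim\bigl\|P_{\geq N}(|u|^{p-1}u)(t)\bigr\|_{L^{2}}^{2},\qquad \mathcal{E}_{N}(t):=\tfrac12\|v_{N}\|_{H^{1}}^{2}+\tfrac12\|\partial_{t}v_{N}\|_{L^{2}}^{2}+\delta\!\int v_{N}\partial_{t}v_{N}\,dx,
\end{equation*}
obtained by testing against $\partial_{t}v_{N}+\delta v_{N}$ for small $\delta\in(0,\alpha)$. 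Iterating the resulting Gr\"onwall bound together with the dyadic splitting above, and absorbing the source-term contribution by the high-frequency tail of $\mathcal{E}_{N/2}$, should yield a recursion $\limsup_{t}\mathcal{E}_{N}\lesssim\theta\,\limsup_{t}\mathcal{E}_{N/2}+o_{N\to\infty}(1)$ with $\theta<1$ for $N$ large; crucially, the dissipation identity $\int_{0}^{\infty}\|\partial_{t}u\|_{L^{2}}^{2}\,ds<\infty$ from Proposition~\ref{local} ensures that an $L^{1}_{t}$ portion of the forcing term is already small on tails $[t_{0},\infty)$, which is what lets the recursion close in the critical range.
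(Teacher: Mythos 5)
Your Duhamel/exponential-decay reduction is the same starting point as the paper's, and your split of the nonlinearity into a high-low commutator piece plus a low-frequency piece is essentially the paper's decomposition $h(u)=h(u_{<\mu^{-1}})+u_{\geq\mu^{-1}}\,O(|u|^{p-1})$. Your treatment of the low-frequency piece via Bernstein and a frequency-separation gain $(M/N)^{\sigma}$ is also in line with the paper. The gap is in the first piece and in the machinery you invoke to close it.

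First, the claimed bound $\|F(u)-F(u_{<M})\|_{L^{2}}\lesssim E^{p-1}\|u_{\geq M}\|_{H^{1}}$ ``throughout the range $p<\lambda(d)$'' is false once $p>d/(d-2)$: any H\"older split $\||u|^{p-1}\|_{q}\|u_{\geq M}\|_{r}$ with both factors bounded through $H^{1}\hookrightarrow L^{2^{*}}$ forces $1/q+1/r\geq p/2^{*}$, which exceeds $1/2$ exactly when $p>d/(d-2)$. Second, and more importantly, even where the H\"older bound holds, your argument only reproduces $\|u_{\geq M}\|_{H^{1}}$ on the right-hand side, which is not small a priori, so you are forced into a recursion. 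That recursion does not close: the Gr\"onwall constant you would obtain is of order $\alpha^{-1}E^{p-1}$ with no smallness, and the appeal to $\int_{0}^{\infty}\|\partial_{t}u\|_{2}^{2}\,dt<\infty$ cannot rescue it because the forcing term $P_{\geq N}(|u|^{p-1}u)$ does not involve $\partial_{t}u$ at all; the dissipation identity gives no $L^{1}_{t}$ smallness of the nonlinear source.

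The paper avoids both problems by never passing through $\|u_{\geq M}\|_{H^{1}}$. For $p<d/(d-2)$ it takes the H\"older exponent on $u_{\geq\mu^{-1}}$ to be $m$ with $1/m=1/2-(p-1)/2^{*}>1/2^{*}$, so that Bernstein yields $\|u_{\geq\mu^{-1}}\|_{m}\lesssim\mu^{\,1-d(1/2-1/m)}\|u\|_{H^{1}}$ with a strictly positive exponent; this gives an \emph{unconditional} factor $\mu^{\kappa}$, and no bootstrap is needed. For $d/(d-2)\leq p<1+4/(d-2)$ (Case 4), the paper does not try to estimate the nonlinearity pointwise in $t$ via $H^{1}$ at all: it fixes a window $I=[t_{0}-\delta,t_{0}]$ with $e^{-\alpha\delta}\ll\mu_{0}$, establishes the local Strichartz bound $\|u\|_{L^{R}_{t}(I;L^{4d/(d-2)}_{x})}\lesssim C(E,\delta)$ by chopping $I$ into $O(\delta/\varepsilon)$ subintervals and running a continuity argument, and then integrates the Bernstein bound on $\|P_{\geq\mu^{-1}}h(u(s))\|_{2}$ against $e^{-\alpha(t_{0}-s)}$, gaining $\mu^{\,1-(p-1)(d-2)/4}>0$. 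Your proposal has no analogue of this Strichartz step and therefore genuinely does not cover $p\geq d/(d-2)$; the modified-energy/recursion device you sketch is not a viable substitute.
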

\begin{proof}
From Duhamel principle,
\begin{align*}
{P_{ \ge \frac{1}{{{\mu}}}}}u(t) = {S_{1,\alpha }}{P_{ \ge \frac{1}{{{\mu}}}}}{u_0} + {S_{2,\alpha }}{P_{ \ge \frac{1}{{{\mu}}}}}{u_1} + \int_0^t {{S_{2,\alpha }}} (t - s){P_{ \ge \frac{1}{{{\mu}}}}}\left( {{{\left| u \right|}^{p - 1}}u} \right)(s)ds.
\end{align*}
Since
\begin{align*}
{\left\| {{S_{1,\alpha }}{P_{ \ge \frac{1}{{{\mu}}}}}{u_0}} \right\|_{{H^1}}}{\rm{ }} \le {e^{ - \alpha t}}{\left\| {{u_0}} \right\|_{{H^1}}},\mbox{  }\mbox{  }\mbox{  }{\left\| {{S_{2,\alpha }}{P_{ \ge {\mu}^{ - 1}}}{u_1}} \right\|_{{H^1}}}{\rm{ }} \le {e^{ - \alpha t}}{\left\| {{u_1}} \right\|_{{L^2}}},
\end{align*}
for $\mu$ sufficiently small, we have
\begin{align*}
&{\left\| {{P_{ \ge \mu^{ - 1}}}u(t)} \right\|_{{H^1}}} \le C{e^{ - \alpha t}}{\left\| {\left( {{u_0},{u_1}} \right)} \right\|_\mathcal{H}} + \int_0^t {{e^{ - \alpha (t - s)}}} {\left\| {{P_{ \ge \mu^{ - 1}}}\left( {{{\left| u \right|}^{p - 1}}u} \right)(s)} \right\|_2}ds.
\end{align*}
Let $h(u) = {\left| u \right|^{p - 1}}u$, split $u$ into  $u = {P_{ \le {\mu ^{ - 1}}}}u + {P_{ \ge {\mu ^{ - 1}}}}u $, then
$$h(u) = h({P_{ \le {\mu ^{ - 1}}}}u) + {P_{ \ge {\mu ^{ - 1}}}}uO({\left| u \right|^{p - 1}}).$$

\noindent{\textit{Case 1.$\mbox{  }1<p<\frac{d}{d-2}$ for $d\ge3$}}\\
Bernstein's inequality and H\"older's inequality imply
\begin{align*}
{\left\| {{P_{ \ge {\mu ^{ - 1}}}}h(u)} \right\|_2} &\le {\left\| {{P_{ \ge {\mu ^{ - 1}}}}h({P_{ \le {\mu ^{ - 1}}}}u)} \right\|_2} + {\left\| {{P_{ \ge {\mu ^{ - 1}}}}\left( {{P_{ \ge {\mu ^{ - 1}}}}uO({{\left| u \right|}^{p - 1}})} \right)} \right\|_2} \\
&\le {\mu }{\left\| {\nabla h({P_{ \le {\mu ^{ - 1}}}}u)} \right\|_2} + {\left\| {{P_{ \ge {\mu ^{ - 1}}}}uO({{\left| u \right|}^{p - 1}})} \right\|_2} \\
&\le {\mu }{\left\| {\nabla {P_{ \le {\mu ^{ - 1}}}}u{{\left| {{P_{ \le {\mu ^{ - 1}}}}u} \right|}^{p - 1}}} \right\|_2} + {\left\| {{P_{ \ge {\mu ^{ - 1}}}}u} \right\|_m}{\left\| {{{\left| u \right|}^{p - 1}}} \right\|_{\frac{{{2^*}}}{{p - 1}}}},
\end{align*}
where $\frac{1}{m}+\frac{p-1}{2^*}=\frac{1}{2}$.
By Bernstein's inequality, we have
$$
{\mu }{\left\| {\nabla {P_{ \le {\mu ^{ - 1}}}}u{{\left| {{P_{ \le {\mu ^{ - 1}}}}u} \right|}^{p - 1}}} \right\|_2} \le {\mu }{\left\| {\nabla u} \right\|_2}\left\| {{P_{ \le {\mu ^{ - 1}}}}u} \right\|_\infty ^{p - 1} \le {\mu ^{-\frac{{d(p - 1)}}{{{2^*}}} + 1}}{\left\| {\nabla u} \right\|_2}\left\| {{P_{ \le {\mu ^{ - 1}}}}u} \right\|_{{2^*}}^{p - 1}.
$$
Since $1<p<\frac{d}{d-2}$, we conclude for some $\kappa>0$,
\begin{align}\label{u7}
{\left\| {{P_{ \ge {\mu ^{ - 1}}}}h({P_{ \le {\mu ^{ - 1}}}}u)} \right\|_2} \le {\mu ^{ \kappa }}{\left\| u \right\|_{{H^1}}}.
\end{align}
Applying Bernstein's inequality, we have
\begin{align*}
{\left\| {{P_{ \ge {\mu ^{ - 1}}}}u} \right\|_m} &\le {\left( {\sum\limits_{N \ge {\mu ^{ - 1}}}^\infty  {\left\| {{P_N}u} \right\|_m^2} } \right)^{1/2}} \le {\left( {\sum\limits_{N \ge {\mu ^{ - 1}}}^\infty  {{N^{2d\left( {\frac{1}{2} - \frac{1}{m}} \right) - 2}}} {N^2}\left\| {{P_N}u} \right\|_2^2} \right)^{1/2}} \\
&\le {\mu ^{ - d\left( {\frac{1}{2} - \frac{1}{m}} \right) + 1}}{\left( {\sum\limits_{N \ge {\mu ^{ - 1}}}^\infty  {{N^2}\left\| {{P_N}u} \right\|_2^2} } \right)^{1/2}}.
\end{align*}
which combined with (\ref{u7}) gives (\ref{345}) by $1<p<\frac{d}{d-2}$.
Next, we bound $\partial_tu$. From Duhamel principle, we have
\begin{align*}
{\partial _t}u(t) &= - \alpha u(t) + {e^{ - \alpha \delta }}\left[ { - \sqrt { - \Delta  + 1 - {\alpha ^2}} \sin \left( {t\sqrt { - \Delta  + 1 - {\alpha ^2}} } \right) + \alpha \cos \left( {t\sqrt { - \Delta  + 1 - {\alpha ^2}} } \right)} \right]u(t - \delta ) \\
&\mbox{  }+ {e^{ - \alpha \delta }}\cos \left( {t\sqrt { - \Delta  + 1 - {\alpha ^2}} } \right){\partial _t}u(t - \delta ) + \int_{t - \delta }^{t} {\cos \left( {(t-s)\sqrt { - \Delta  + 1 - {\alpha ^2}} } \right){e^{ - \alpha (t - s)}}} \left( {{{\left| u \right|}^{p - 1}}u} \right)(s)ds.
\end{align*}
For $\mu_1\ll\mu_0$, (\ref{345}) implies that there exist $\eta>0$ and $T_0>0$ such that
$$
\|P_{\ge\eta^{-1}}u(t)\|_{H^1}<\mu_1,
$$
for $t>T_0$. Taking $\delta$ large such that $e^{-\alpha\delta}<\mu_1$, then for $t>T_0+\delta$, it suffices to prove
$${\left\| {{P_{ \ge {\eta ^{ - 1}}}}h(u(t))} \right\|_2} \le {\eta ^{\lambda }},$$
for some $\lambda>0$.
The rest of the proof of (\ref{kj}) is the same as (\ref{345}).

\noindent{\textit{Case 2.$\mbox{  }1<p<\infty$ for $d=1$.}}\\
By Bernstein's inequality, H\"older's inequality, Sobolev embedding theorem,
$${\left\| {{P_{ \ge {\mu ^{ - 1}}}}h(u)} \right\|_2} \le \mu {\left\| {\nabla {P_{ \le {\mu ^{ - 1}}}}u} \right\|_2}{\left\| {{{\left| {{P_{ \le {\mu ^{ - 1}}}}u} \right|}^{p - 1}}} \right\|_\infty } + {\left\| {{P_{ \ge {\mu ^{ - 1}}}}u} \right\|_2}{\left\| {{{\left| u \right|}^{p - 1}}} \right\|_\infty } \le \mu.
$$
The remaining proof is the same as Case 1.

\noindent{\textit{Case 3.$\mbox{  }1<p<\infty$ for $d=2$.}}\\
The proof is also similar to Case 1, we omit it. 

\noindent{\textit{Case 4.$\mbox{   }\frac{d}{d-2}\le p<1+\frac{4}{d-2}$ for $d=3,4$.}}\\
Choosing $\delta$ sufficiently large, such that $e^{-\alpha\delta}\ll \mu_0$. Fix $t_0>\delta$, consider the interval $I\equiv[t_0-\delta,t_0]$.
By Duhamel principle, for $t\in I$, we have
$$
u(t)=S_{1,\alpha}(t-t_0+\delta)u(t_0-\delta)+S_{2,\alpha}(t-t_0+\delta)u(t_0-\delta)+\int^t_{t_0-\delta}S_{2,\alpha}(t-s)h(u(s))ds.
$$
Fix $\varepsilon$ sufficiently small, divide $I$ into subintervals $I_1, I_2,...,I_n$, such that $|I_j|\sim \varepsilon$, then $n\sim \frac{\delta}{\varepsilon}$.
Taking $p-1<R<\frac{4}{d-2}$, H\"older's inequality and Strichartz estimates give
\begin{align*}
\|u(t)\|_{L^R(I_j;L^{\frac{4d}{d-2}})}&\le |I_j|^{\frac{1}{R}-\frac{d-2}{4}}\|u(t)\|_{L^{\frac{4}{d-2}}_t(I_j;L^{\frac{4d}{d-2}}_x)}\\
 &\lesssim|I_j|^{\frac{1}{R}-\frac{d-2}{4}}E+|I_j|^{\frac{1}{R}-\frac{d-2}{4}}\|h(u(s))\|_{L^1_t(I_j;L^2)}.
\end{align*}
By H\"older's inequality, Sobolev embedding theorem,
$${\left\| {h(u(s))} \right\|_{L_t^1({I_j};{L^2})}} \le \int_{{I_j}} {{{\left\| {{{\left| u \right|}^{p - 1}}} \right\|}_{L_x^{\frac{{4d}}{{(p - 1)(d - 2)}}}}}{{\left\| u \right\|}_{L_x^\theta }}dt}  \le C(E){\left| {{I_j}} \right|^{ - \frac{{p - 1}}{R} + 1}}\left\| u \right\|_{L_t^R({I_j};{L^{4d/(d - 2)}})}^{p - 1},
$$
where $\frac{1}{\theta}+\frac{{(p - 1)(d - 2)}}{{4d}}=\frac{1}{2}$.
Thus
$$
\|u(t)\|_{L^{R}_t(I_j;L^{\frac{4d}{d-2}}_x)}\le C(E)\varepsilon^{\frac{1}{R}-\frac{d-2}{4}}\big(1+{\left| {{\varepsilon}} \right|^{ - \frac{{p - 1}}{R} + 1}}\left\| u \right\|_{L_t^R({I_j};{L^{4d/(d - 2)}})}^{p - 1}\big).
$$
Since $p\ge 2$ in Case 3, by continuity method, we have
$$\|u(t)\|_{L^{R}_t(I;L^{\frac{4d}{d-2}}_x)}\le 8C(E)\varepsilon^{\frac{1}{R}-\frac{d-2}{4}}.
$$
Summing up all the intervals, we get
\begin{align}\label{kjna}
\|u(t)\|_{L^{R}_t(I;L^{\frac{4d}{d-2}}_x)}\lesssim C(\delta,E),
\end{align}
where $C(E,\delta)$ is independent of $t$.
Again by Duhamel principle,
$${\left\| {{P_{ \ge {\mu ^{ - 1}}}}u(t_0)} \right\|_{{H^1}}} \le C(E){e^{ - \alpha \delta }} + \int_{t_0 - \delta }^{t_0} {{e^{ - \alpha (t - s)}}{{\left\| {{P_{ \ge {\mu ^{ - 1}}}}h(u(s))} \right\|}_2}} ds.$$
Hence it suffices to bound ${\left\| {{P_{ \ge {\mu ^{ - 1}}}}h(u)} \right\|_2}.$
Using similar arguments in Case 1, by Bernstein's inequality and H\"older's inequality, we have
$${\left\| {{P_{ \ge {\mu ^{ - 1}}}}h(u)} \right\|_2} \le \mu {\left\| {\nabla {P_{ \le {\mu ^{ - 1}}}}u} \right\|_2}{\left\| {{{\left| {{P_{ \le {\mu ^{ - 1}}}}u} \right|}^{p - 1}}} \right\|_\infty } + {\left\| {{P_{ \ge {\mu ^{ - 1}}}}u} \right\|_\theta }{\left\| {{{\left| u \right|}^{p - 1}}} \right\|_{\frac{{4d}}{{(p - 1)(d - 2)}}}},
$$
where $\frac{1}{\theta}+\frac{{(p - 1)(d - 2)}}{{4d}}=\frac{1}{2}$.
Applying Bernstein's inequality, we get
$${\left\| {{{\left| {{P_{ \le {\mu ^{ - 1}}}}u} \right|}^{p - 1}}} \right\|_\infty } \le {\mu ^{ - (p - 1)\frac{{d - 2}}{4}}}{\left\| u \right\|^{p-1}_{\frac{{4d}}{{d - 2}}}},$$
which combined with H\"older's inequality and (\ref{kjna}) give
\begin{align*}
&\int_{{t_0} - \delta }^{{t_0}} {{e^{ - \alpha (t - s)}}} \mu {\left\| {\nabla {P_{ \le {\mu ^{ - 1}}}}u(s)} \right\|_2}{\left\| {{{\left| {{P_{ \le {\mu ^{ - 1}}}}u(s)} \right|}^{p - 1}}} \right\|_\infty }ds \\
&\le C(E){\mu ^{1 - (p - 1)\frac{{d - 2}}{4}}}\int_{{t_0} - \delta }^{{t_0}} {{e^{ - \alpha (t - s)}}} \left\| u \right\|_{4d/(d - 2)}^{p - 1}ds \\
&\le C(E){\mu ^{1 - (p - 1)\frac{{d - 2}}{4}}}\left\| u \right\|_{L_t^R(I;{L_x^{4d/(d - 2)}})}^{p - 1} \\
&\le C(E,\delta ){\mu ^{1 - (p - 1)\frac{{d - 2}}{4}}}.
\end{align*}
This bound is acceptable since $1<p<1+\frac{4}{d-2}$.
The same arguments as Case 1 yield the desired bound for
${\left\| {{P_{ \ge {\mu ^{ - 1}}}}u} \right\|_\theta }$ by $1<p<1+\frac{4}{d-2}$.
Therefore, we finish our proof.
\end{proof}

Now, we prove the spatial localization, namely the following proposition:
\begin{Proposition}\label{22}
Let $u$ be a global solution to (\ref{1}) with $\mathcal{H}$ norm at most $E>0$. Then there exit $J=J(E)$ depending only on $E$, and functions $x_1(t),...,x_J(t):\Bbb R^+\to \Bbb R^d$, such that for any $\mu>0$ there exits $\eta=\eta(E,\mu)>0$ such that
$$\mathop {\lim \sup }\limits_{t \to \infty } {\int_{dist(x,\{ {x_1}(t),...,{x_J}(t)\} ) > {\eta ^{ - 1}}} {\left| {\nabla u} \right|} ^2} + {\left| u \right|^2} + {\left| {{\partial _t}u} \right|^2} \le \mu.$$
\end{Proposition}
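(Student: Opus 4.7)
My plan is to combine the frequency localization of Lemma~\ref{13} with a greedy extraction of concentration centers, and to bound the residual mass by a contradiction argument using a Gérard-type profile decomposition.

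First, apply Lemma~\ref{13}: for any $\mu_1>0$ there is $N=N(E,\mu_1)$ such that for all sufficiently large $t$, $\|P_{>N}(u,\partial_tu)(t)\|_{\mathcal H}<\mu_1$. It therefore suffices to spatially localize the frequency-truncated pieces $(v,w):=P_{\le N}(u,\partial_tu)$, which by Bernstein satisfy $\|v\|_\infty+\|\nabla v\|_\infty+\|w\|_\infty\lesssim_{E,N}1$ and are smooth with all derivatives bounded in terms of $E$ and $N$.

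Second, fix a threshold $\delta_0=\delta_0(E)$ independent of $\mu$. At each time $t$, define the local density
\[
\phi(t,x):=\int_{B(x,1)}\bigl(|v|^2+|\nabla v|^2+|w|^2\bigr)(t,y)\,dy,
\]
and perform a greedy extraction of centers: select $x_1(t)$ as a maximizer of $\phi(t,\cdot)$, and for each subsequent $j$, choose $x_j(t)$ as a maximizer of $\phi(t,\cdot)$ among points at distance $\ge 3$ from all previously chosen centers, stopping once the restricted maximum drops below $\delta_0$. Since the unit balls around the chosen centers are disjoint and $\|\phi(t,\cdot)\|_{L^1}\lesssim E^2$, the number of centers is bounded by $J:=J(E)\lesssim E^2/\delta_0$, which depends only on $E$; any unused slots are filled with dummy points.

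The crux is the residual bound. Given $\mu>0$, we must produce $\eta=\eta(E,\mu)$ such that $\int_{A(t)}(|v|^2+|\nabla v|^2+|w|^2)<\mu/2$ for $t$ large, where $A(t):=\mathbb R^d\setminus\bigcup_j B(x_j(t),\eta^{-1})$. I argue by contradiction: if the conclusion fails along a sequence $t_n\to\infty$ with $\eta_n\to 0$, I apply a Gérard-type profile decomposition to the frequency-localized restricted sequence $\chi_{A(t_n)}(v(t_n),w(t_n))$ (using a smooth cutoff chosen at a scale compatible with $1/N$ to preserve frequency localization up to $o(1)$ errors). The non-vanishing residual mass forces a non-trivial weak profile centered at some $y_n$ with $\mathrm{dist}(y_n,\{x_j(t_n)\})\to\infty$, and by frequency localization together with Bernstein this profile carries local density $\phi(t_n,y_n)$ bounded below by a constant depending only on $E$ and $N$. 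Choosing $\delta_0$ below this universal threshold yields a contradiction with the termination of the greedy procedure. The bound for the full $(u,\partial_tu)$ then follows by the triangle inequality combined with the $P_{>N}$ estimate from Lemma~\ref{13}.

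The main obstacle is establishing the universal lower bound on the extracted profile's local density: a weak profile may a priori carry only a small fraction of the residual mass, so its $L^2$ norm need not be bounded below by a constant independent of $\mu$. I address this by iterating the extraction on the residual itself---removing all non-trivial profiles until only a term with vanishing concentration remains---and then using that for frequency-localized functions with bounded $H^1$ norm, any fixed amount of $L^2$ mass concentrated around a single point produces a quantifiable local density via Bernstein. This careful book-keeping, which exploits Lemma~\ref{13} decisively, is what permits $J$ to depend only on $E$ while $\eta$ accommodates an arbitrary $\mu$.
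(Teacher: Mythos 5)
Your proposal has a genuine gap in the residual bound, and the gap is exactly where the paper's proof does something fundamentally different.

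Your contradiction argument applies a Gérard-type profile decomposition to $\chi_{A(t_n)}(v,w)(t_n)$ and deduces a non-trivial profile from the hypothesis that the residual $\mathcal H$-mass stays $\ge\mu/2$. But profile decomposition for bounded $\mathcal H$-sequences does \emph{not} control the $L^2$ (or $H^1$) mass of the vanishing remainder: the remainder $r_n^J$ vanishes only in $L^q$ for $q$ strictly above $2$ (or in suitable Strichartz/Besov norms), while its $\mathcal H$-norm can remain bounded away from zero. Consider a frequency-localized function spread uniformly over a ball of radius $n$ with total $L^2$ mass $\sim\mu$: its local density at every point tends to zero, it has no non-trivial weak profile, and yet its $L^2$ mass is $\mu$. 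Therefore your step ``the non-vanishing residual mass forces a non-trivial weak profile'' is false, and iterating the extraction does not help, because iteration only drives the $L^q$ ($q>2$) norm to zero, not the $L^2$ mass. Relatedly, the greedy extraction bounds the \emph{local supremum} of $\phi(t,\cdot)$ on $A(t)$, not the \emph{integral} of the density over $A(t)$; these are very different statements for mass spread thinly over a large region.

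What the paper does instead is essentially dynamical, and this is the idea that is missing. First (Propositions~\ref{33} and~\ref{44}) one proves spatial localization with $J=J(E,\mu_0)$ depending also on $\mu_0$, using Duhamel, Strichartz, a perturbation lemma, finite speed of propagation, and the damping-induced decay $e^{-\alpha t}$ of the linear propagators; this is where ``the mass cannot spread thinly'' is actually shown, and it uses the equation, not a fixed-time decomposition. Second, to remove the $\mu_0$-dependence of $J$ one needs an energy-concentration threshold: Lemmas~\ref{gy} and the one that follows show that any point carrying at least $\eta_1^2$ of $\mathcal H$-density at a late time must in fact carry at least $\beta(E)$, a threshold depending only on $E$. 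This lemma is proved by comparing with the solution launched from truncated data via finite speed of propagation and invoking the small-data exponential decay of Proposition~\ref{local}: if the local energy stayed below $\beta$, the comparison solution would decay exponentially, forcing the local energy below $\eta_1^2$, a contradiction. Only with this dynamical threshold in hand can one run Tao's argument (Proposition 10.1 of \cite{TT}) to conclude $J=J(E)$. In short, your static profile-decomposition argument cannot manufacture the universal threshold $\delta_0(E)$ that your greedy bound on $J$ requires; the paper obtains that threshold from the damping dynamics.
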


Before proving Proposition \ref{22}, we first prove a weaker proposition:
\begin{Proposition}\label{33}
Let $u$ be a global solution to (\ref{1}) with $\mathcal{H}$ norm at most $E>0$. Then for $\mu_0>0$, there exits $J=J(E,\mu_0)$ and functions $\tilde{x}_1(t),...,\tilde{x}_J(t):\Bbb R^+\to \Bbb R^d$, and $\eta=\eta(E,\mu_0)>0$ such that
$$\mathop {\lim \sup }\limits_{t \to \infty } \int_{dist(x,\{ {\tilde{x}_1}(t),...,{\tilde{x}_J}(t)\} ) > {\eta ^{ - 1}}}  {\left| u \right|^2}\le \mu_0.$$
\end{Proposition}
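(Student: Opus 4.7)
I would combine the frequency localization from Lemma~\ref{13} with a greedy $L^2$-mass extraction in physical space, closing the residual estimate by exploiting the damping identity $\int_0^\infty\|\partial_tu\|_2^2\,ds<\infty$ from Proposition~\ref{local}.

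The first step is a frequency reduction: given $\mu_0>0$, Lemma~\ref{13} applied at threshold $\mu_0/10$ yields $K=K(\mu_0)$ and $T_0>0$ such that $\|P_{\ge K}u(t)\|_{L^2}\le \mu_0/10$ for $t\ge T_0$. Writing $v(t)=P_{\le K}u(t)$, it suffices to prove the same estimate for $v$ with $\mu_0/2$ in place of $\mu_0$.

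For the second step, fix a radius $R\gg K^{-1}$ and a threshold $\delta=\delta(E,\mu_0)>0$. At each $t\ge T_0$ I would iteratively pick centers $\tilde x_j(t)$ maximizing $\int_{B(x,R)}|v(t,y)|^2\,dy$ subject to $B(x,R)$ being disjoint from $\bigcup_{i<j}B(\tilde x_i(t),3R)$, terminating once this maximum drops below $\delta$. The uniform bound $\|v(t)\|_{L^2}^2\le E^2$ together with disjointness forces termination after at most $J=\lceil E^2/\delta\rceil$ iterations, giving $J=J(E,\mu_0)$.

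The third step, which is the main obstacle, is the residual estimate on $\Omega(t):=\Bbb R^d\setminus\bigcup_j B(\tilde x_j(t),3R)$. Greedy termination only guarantees small $L^2$-mass per $R$-ball inside $\Omega(t)$, and this does not by itself rule out a diffuse tail summing to large total mass. I would close the gap by contradiction: if $\int_{\Omega(t_n)}|v(t_n)|^2>\mu_0/2$ along some $t_n\to\infty$, apply a Bahouri--Gerard type profile decomposition to the $H^1$-bounded sequence $\{u(t_n)\}$, and use $\int_0^\infty\|\partial_tu\|_2^2\,ds<\infty$ to pass to a subsequence along which $\|\partial_tu(t_n)\|_{L^2}\to 0$. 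The resulting profiles then satisfy, in the weak limit, the stationary equation $-\Delta\phi+\phi=|\phi|^{p-1}\phi$, so each nonzero profile carries a uniform $H^1$ lower bound and, by the frequency localization, a uniform $L^2$ lower bound as well. Only finitely many nonzero profiles can therefore occur, and for large $n$ they supply ball centers matching the greedy ones up to a bounded combinatorial loss, contradicting the failure of the residual bound. The delicate part is exactly this passage from ``small mass per ball'' to ``small total residual mass,'' where the dissipativity of the equation is essential.
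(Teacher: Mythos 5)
You correctly identify the main obstruction: a greedy ball-by-ball $L^2$ extraction only controls the maximum $L^2$ mass in any single $R$-ball inside the residual region, not the total residual mass, and a diffuse low-amplitude tail can evade it. However, the mechanism you propose to close that gap does not work. A Bahouri--G\'erard decomposition of the $H^1$-bounded sequence $\{u(t_n)\}$ (with translations as the symmetry group, which is the relevant one here) gives $\limsup_n \|r_n^J\|_{L^q}\to 0$ as $J\to\infty$ only for $q>2$; the $L^2$ norm of the remainder is \emph{not} controlled. The sequence $r_n(x)=n^{-d/2}\phi(x/n)$ is the standard obstruction: it is bounded in $H^1$, vanishes in $\dot H^1$ and in every $L^q$ with $q>2$, yet keeps its full $L^2$ mass; it also becomes increasingly low-frequency, so the frequency localization of Lemma~\ref{13} does not remove it. Thus, even after you have extracted finitely many nonzero stationary profiles and matched them to ball centers, the remainder could still carry a residual $L^2$ mass of order one, and the intended contradiction with ``$\int_{\Omega(t_n)}|v(t_n)|^2>\mu_0/2$'' never materializes. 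The dissipative input $\int_0^\infty\|\partial_t u\|_2^2\,ds<\infty$ is too weak to rule this out (and the passage from $\|\partial_t u(t_n)\|_2\to 0$ to ``the profiles are stationary'' itself requires a separate integration-in-time argument you would also need to supply).

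The paper's proof closes the $L^2$-residual gap using a quantitatively stronger damping effect that your scheme never invokes: the explicit exponential decay of the damped linear propagators $S_{1,\alpha}(t), S_{2,\alpha}(t)\sim e^{-\alpha t}$, which is available precisely because $\alpha>0$. After selecting a good time $t_*$ with $\|\partial_t u(t_*)\|_2$ small (Step One), establishing an $L^\infty$-type localization of $P_{\le c(\mu_2)^{-1}}u$ around finitely many points (Step Two), and upgrading this to an $L^Q_tL^{2p}_x$-Strichartz control of $u$ in the far region over an interval of length $\sim\mu_1^{-1}$ via a cutoff-and-perturbation argument and finite speed of propagation (Step Three, Claims~1--3), the key Step Four writes $u(t_*)$ by Duhamel from time $t_*-\mu_1^{-1}$: the linear part is $O(e^{-\alpha\mu_1^{-1}})$ in $L^2$ on the whole space, and the nonlinear Duhamel integral, cut off to $\{D>\mu_4^{-3}\}$, is $O(\mu_2)$ by finite speed and the Strichartz control just obtained. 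Taking $\mu_1$ small kills the diffuse $L^2$ tail. This exponential semigroup decay has no analogue in your argument, and without it the residual $L^2$ estimate cannot be reached; replacing it by profile decomposition is not a different route to the same goal but a route that stops short of it.
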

\begin{proof}
The whole proof is divided into five parts. Fix $E>0$ and $\mu_0$,
choose parameters $\mu_0\gg\mu_1\gg\mu_2\gg\mu_3\gg\mu_4>0$.\\

{\textit{Step One. Selecting a ``good" time sequence}}
For any $t_0>T_0$, consider the time interval $[t_0-\mu_1^{-1}, t_0+\mu_1^{-1}]$. Since
$$\mathop {\lim }\limits_{{t_0} \to \infty } \int_{{t_0} - \mu _1^{ - 1}}^{{t_0} + \mu _1^{ - 1}} {\left\| {{\partial _t}u(s)} \right\|_2^2} ds = 0,$$
there exists $T_1$ sufficiently large such that for $t_0>T_1$,
$$\int_{{t_0} - \mu _1^{ - 1}}^{{t_0} + \mu _1^{ - 1}} {\left\| {{\partial _t}u(s)} \right\|_2^2}ds\le \mu_2^2.$$
Thus there exits good time $t_*\in [t_0-\mu_1^{-1}, t_0+\mu_1^{-1}]$, such that
\begin{align}\label{pk}
\|\partial_tu(t_*)\|_2\le \mu_2^2.
\end{align}

{\textit{Step Two. $L^{\infty}_x$ spatial localization at fixed time}.}
From Lemma 3.1, for any $\mu_2>0$ there exists $c(\mu_2)>0$, such that for $T>T_0$,
\begin{align}\label{s1}
\|u_{>c(\mu_2)^{-1}}\|_{H^1}\le \mu_2^2.
\end{align}
As step one, we fix time $t>T_1$. Now we claim there exist $J(E,\mu_2,\mu_3)$ and $x_1(t),...,x_J(t):\Bbb R^+ \to \Bbb R^d$, such that
\begin{align}\label{45}
|u_{<c(\mu_2)^{-1}}(t,x)|<\mu_3, \mbox{  }{\rm{whenever}}\mbox{  }dist(x,\{x_1(t),...,x_J(t)\}) \ge 2\mu_3^{-1}.
\end{align}
Indeed, let $x_1(t),...,x_J(t)$ be a maximal $2\mu_3^{-1}$-separated set of points in $R^d$ such that
\begin{align*}
|u_{<c(\mu_2)^{-1}}(t,x_j(t))|\ge \mu_3 \mbox{  }\mbox{  }{\rm{for}}\mbox{  }\mbox{  }{\rm{all}} \mbox{  }\mbox{  } 1\le j\le J(t).
\end{align*}
It is easy to verify
$$
|u_{<c(\mu_2)^{-1}}(t,x_j(t))|\lesssim c(\mu_2)^{d/2}\int_{|x-x_j(t)|\le \mu_3^{-1}}|u|^2dx+\mu_3^d\|u\|_2.
$$
Then we have
$$
\mu_3\le |u_{<c(\mu_2)^{-1}}(t,x_j(t))|\lesssim c(\mu_2)^{d/2}\int_{|x-x_j(t)|\le \mu_3^{-1}}|u|^2dx.
$$
Since $x_j(t)$ are $2\mu_3^{-1}$-separated, thus $J$ is finite depending on $\mu_2,\mu_3$. By the maximal property of the set $\{x_1,...,x_J\}$,
we conclude
$$
|u_{<c(\mu_2)^{-1}}(t,x_j(t))|<\mu_3, \mbox{  }\mbox{  } {\rm{whenever}} \mbox{  }\mbox{  }dist(x,\{x_1,...,x_J\})\ge 2\mu^{-1}_3.
$$

{\textit{Step Three. $L^{\infty}_x$ spatial localization on an interval centered at  good time}.}
For $t>T_1$, consider good time $t_*$ in $[t-\mu_1^{-1},t+\mu_1^{-1}]$. Then $[t-\mu_1^{-1},t+\mu_1^{-1}]\subset[t_*-4\mu_1^{-1},t_*+4\mu_1^{-1}]\equiv I$. Define the distance function $D(x)=dist(x,\{x_1(t_*),x_2(t_*),...,x_J(t_*)\})$. Let $\chi:\Bbb R^d\to R^+$ be a smooth cutoff function which equals 1 for $D(x)\le 2\mu_3^{-1}$, vanishes for $D(x)\ge 3\mu_3^{-1}$, and $\nabla^k\chi=O_k(\mu_3^k)$ for $k\ge0$. Then we have

\noindent {\textit{Claim 1.}} $\|S_{1,\alpha}[(1-\chi)u(t_*)]\|_{L^{Q}_t(I;L^{2p}_x)}+\|S_{2,\alpha}[(1-\chi)u(t_*)]\|_{L^{\theta*}_t(I;L^{2\theta^*}_x)}\lesssim_{\mu_1} \mu_2^2.$\\
By Strichartz estimates,
$$\|S_{2,\alpha}[(1-\chi)u(t_*)]\|_{L^{Q}_t(I:L^{2p}_x)} \le Ce^{C\mu_1^{-1}}\|\partial_tu(t_*)\|_2,$$
which combined with (\ref{pk}) yields the desired bounds for $S_{2,\alpha}$. Since high frequency is small by (\ref{s1}), it suffices to prove
$${\left\| {{S_{1,\alpha}}[(1 - \chi ){P_{ \le C(\mu _2)^{-1}}}u({t_*})]} \right\|_{L_t^{Q}(I;L_x^{2{p}})}}\lesssim_{\mu_1}{\mu _2}^2.$$
From the rapid decay of the convolution kernel of $P_{<c(\mu_2)^{-1}}$ and the support of $1-\chi$, we see that $(1-\chi)P_{<c(\mu_2)^{-1}}(1_{D<\mu_3^{-1}}P_{<c(\mu_2)^{-1}}u)$ can be absorbed by $\mu_2^2$, it suffices to prove
$${\left\| {{S_{1,\alpha }}\left[ {(1 - \chi ){P_{ < C{{({\mu _2})}^{ - 1}}}}\left( {{1_{D > \mu _3^{ - 1}}}{P_{ < C{{({\mu _2})}^{ - 1}}}}u({t_*})} \right)} \right]} \right\|_{L_t^{{Q}}(I;L_x^{2p})}} \le \mu _2^2.
$$
Indeed, stationary phase shows that the operator ${{S_{1,\alpha }}(1 - \chi ){P_{ < C{{({\mu _2})}^{ - 1}}}}}$ have an operator norm of $C_1(\mu_2)$ on $L^{2\theta^*}$, then thanks to (\ref{45}), for some $\delta>0$, we have
\begin{align*}
&{\left\| {{S_{1,\alpha }}\left[ {(1 - \chi ){P_{ < C{{({\mu _2})}^{ - 1}}}}\left( {{1_{D > \mu _3^{ - 1}}}{P_{ < C{{({\mu _2})}^{ - 1}}}}u({t_*})} \right)} \right]} \right\|_{L_t^{{Q}}(I;L_x^{2{p}})}} \\
&\le {C_1}({\mu _2}){\left\| {{1_{D > \mu _3^{ - 1}}}{P_{ < C{{({\mu _2})}^{ - 1}}}}u({t_*})} \right\|_{L_t^{{Q}}(I;L_x^{2{p}})}} \\
&\le {C_1}({\mu _2})\left\| {{1_{D > \mu _3^{ - 1}}}{P_{ < C{{({\mu _2})}^{ - 1}}}}u({t_*})} \right\|_{L_t^{{Q}}(I;L_x^\infty )}^\delta \left\| {{1_{D > \mu _3^{ - 1}}}{P_{ < C{{({\mu _2})}^{ - 1}}}}u({t_*})} \right\|_{L_t^{{Q}}(I;L_x^{2})}^{1 - \delta } \\
&\le {C_1}({\mu _2},{\mu _1})\mu _3^\delta  \lesssim {\mu_2 ^2}.
\end{align*}

\noindent {\textit{Claim 2.}} $\|1_{D>\mu_4^{-2}}u\|_{L^{Q}_t(I:L^{2p}_x)}\lesssim_{\mu_1}{\mu _2}.$\\
This claim can be proved by perturbation theorem and Strichartz estimates. Indeed, let $v$ be a solution to (\ref{1}) on $I$ with initial data $v(t_*)=\chi u(t_*)$, $\partial_tv(t_*)=\partial_t u(t_*)$. Then by perturbation theorem, Claim 1, (\ref{pk}), we have
$$
\|u-v\|_{L_t^{{Q}}(I;L_x^{2p})}\lesssim_{\mu_1} \mu_2^2.
$$
It suffices to prove
\begin{align}\label{dfr}
\|1_{D>\mu_4^{-2}}v\|_{L_t^{{Q}}(I;L_x^{2p})}\lesssim_{\mu_1} \mu_2^2.
\end{align}
Choosing another weight function $W:\Bbb R^d\to \Bbb R^+$ comparable to $1+\mu_4D$ which obeys the bounds $\nabla W, \nabla^2W=O(\mu_4)$. Since $W\chi=O(1)$, we have
$$
\|Wv(t_0)\|_2\lesssim1.
$$
Since $v$ solves (1), we have
$$
{\partial _{tt}}(Wv) + 2\alpha {\partial _t}(Wv) - \Delta (Wv) + Wv = W{\left| v \right|^{p - 1}}v + O({\mu _4}\left| v \right|) + O({\mu _4}\left| {\nabla v} \right|).
$$
Strichartz estimates imply
$${\left\| {Wv} \right\|_{L_t^{{\theta ^*}}(I;L_x^{2{\theta ^*}})}} + {\left\| {\big(Wv,\partial_t(Wv)\big)} \right\|_{L_t^\infty (I;\mathcal{H})}} \lesssim {\left\| {(Wv(t'),{\partial _t}Wv(t'))} \right\|_\mathcal{H}} + {\left\| {W{{\left| u \right|}^p}} \right\|_{L_t^1(I;L_x^2)}} + {\mu _4},
$$
for any subinterval $I'$ of $I$ and any $t'\in I'$. Denote the left side by $X(I')$, H\"older's inequality and Sobolev embedding theorem reveal that $$
{\left\| {W{{\left| u \right|}^p}} \right\|_{L_t^1(I';L_x^2)}} \le C{\left| {I'} \right|^{\frac{4}{{d + 2}}}}X(I').
$$
Chopping $I$ up to sufficiently small intervals, we have
$$X(I')\le C(\mu_1);$$
particularly, we conclude
$$
\|1_{D>\mu^{-2}_4}v\|_{L_t^{{\theta ^*}}(I;L_x^{2\theta^*})}\le C(\mu_1)\mu_4,
$$
which yields (\ref{dfr}) by interpolation inequality thus finishing the proof of Claim 2.

\noindent {\textit{Claim 3.}} $\|1_{D>\mu_4^{-3}}\int_IS_{2,\alpha}(t-s)\big(|u|^{p-1}u\big)(s)ds\|_{L^2(\Bbb R^d}\lesssim_{\mu_1}\mu _2.$ \\
From finite speed of propagation, $S_{2,\alpha}(t-s)\big(|u|^{p-1}u1_{D\le \mu_4^{-2}}\big)$ is supported in $D\le \mu_4^{-2}+4\mu_1^{-1}$.
Therefore $1_{D>\mu_4^{-3}}\int_IS_{2,\alpha}(t-s)\big(|u|^{p-1}u(s)1_{D\le \mu_4^{-2}}\big)ds=0$. Thus it suffices to prove
\begin{align}\label{ko}
{\left\| {\int_I {{S_{2,\alpha}}} (t - s)|u{|^{p - 1}}u(s){1_{D \ge \mu _4^{ - 2}}}ds} \right\|_{{L^2}}}{ \mathbin{\lower.3ex\hbox{$\buildrel<\over
{\smash{\scriptstyle\sim}\vphantom{_x}}$}} _{{\mu _1}}}{\mu _2}.
\end{align}
By Strichartz estimates, the left is bounded by
$$\||u|^p1_{D\ge \mu_4^{-2}}\|_{L^1_tL^2_x}.$$
Then (\ref{ko}) follows from H\"older's inequality and Claim 2.

{\textit{Step Four. $L^2$ localization of good times}.}
In this step, we prove the $L^2$ localization of $u(t_*)$, namely for $T_1$ sufficiently large, $t>T_1$,
\begin{align}\label{ol}
\|1_{D>\mu_4^{-3}}u(t_*)\|_2=O_{L^2}(\mu_1).
\end{align}
The proof is based on the decay of linear part and Claim 3 in step three. Indeed, from Duhamel principle and Claim 3,
$$1_{D>\mu_4^{-3}}u(t_*)=1_{D>\mu_4^{-3}}S_{1,\alpha}(\mu_1^{-1})u(t_*-\mu_1^{-1})+1_{D>\mu_4^{-3}}S_{2,\alpha}(\mu-1^{-1})u_t(t_*-\mu_1^{-1})+O_{L^2}(\mu_2).
$$
Then since $S_{1,\alpha}$ and $S_{2,\alpha}$ have a exponential decay, we obtain
\begin{align*}
\left\| {{1_{D > \mu _4^{ - 3}}}u({t_*})} \right\|_2^2 &= \left\langle {{1_{D > \mu _4^{ - 3}}}u({t_*}),{S_{1,\alpha}}(\mu _1^{ - 1})u({t_*} - \mu _1^{ - 1})} \right\rangle  + \left\langle {{1_{D > \mu _4^{ - 3}}}u({t_*}),{S_{2,\alpha}}(\mu _1^{ - 1}){\partial _t}u({t_*} - \mu _1^{ - 1})} \right\rangle  + O({\mu _2}) \\
&\le {e^{ - \mu _1^{ - 1}\alpha }}\left\| {u({t_*})} \right\|_2^2 + O({\mu _2}) \lesssim {\mu _1}.
\end{align*}
Thus (\ref{ol}) follows.

{\textit{Step Five. $L^2$ localization of all time}.}
First from Duhamel principle and similar arguments as step four, it is easy to verify,
$$\|1_{D\ge \mu_4^{-3}}u(t)\|_2\le \mu_1,$$
for $t\in(t_*, t_*+4\mu_1^{-1})$.
Indeed, from Duhamel principle, finite speed of propagation, Claim 2 and Claim 3, we have
\begin{align*}
{\left\| {{1_{D > \mu _4^{ - 4}}}u(t)} \right\|_2} &\le {\left\| {{1_{D > \mu _4^{ - 4}}}{S_{1,\alpha }}(t - {t_*})u({t_*})} \right\|_2} + {\left\| {{1_{D > \mu _4^{ - 4}}}{S_{2,\alpha }}(t - {t_*})u({t_*})} \right\|_2} \\
&+ {\left\| {{1_{D > \mu _4^{ - 4}}}\int_{{t_*}}^t {{e^{ - \alpha (t - s)}}{S_{2,\alpha }}(t - s)\left( {{{\left| u \right|}^{p - 1}}u} \right)(s)} } \right\|_2} \\
&\lesssim {\left\| {{1_{D > \mu _4^{ - 3}}}u({t_*})} \right\|_2} + {\left\| {{1_{D > \mu _4^{ - 3}}}u({t_*})} \right\|_2} + {\mu _2} \lesssim {\mu _1}.
\end{align*}
Splitting the whole interval $[T_1,\infty)$ into subintervals with length $2\mu_1^{-1}$, denote these subintervals as $I_1, I_2, I_3,...$.  Denote $t_*\in I_j$ by $t^j_*$. It is obvious that $I_{j+1}$ is covered by $(t^j_*,t^j_* + 4\mu _1^{ - 1})$, for $j=1,2,...$. Now let's define $\tilde{x}_j(t)$ for each $t$ by the following rule:
For $t\in I_{j+1}$, take $\tilde{x}_j(t)=x_j(t^j_*)$.
It is direct to see $\tilde{x}_j(t)$ defined above satisfies Proposition \ref{33} for all $t>T_1+2\mu_1^{-1}$.
\end{proof}

\begin{Proposition}\label{44}
Let $u$ be a global solution to (\ref{1}) with $\mathcal{H}$ norm at most $E>0$. Then for $\mu_0>0$, there exits $J=J(E,\mu_0)$ and functions $\tilde{x}_1(t),...,\tilde{x}_J(t):\Bbb R^+\to \Bbb R^d$, and $\eta=\eta(E,\mu_0)>0$ such that
$$\mathop {\lim \sup }\limits_{t \to \infty } {\int_{dist(x,\{ {\tilde{x}_1}(t),...,{\tilde{x}_J}(t)\} ) > {\eta ^{ - 1}}} {\left| {\nabla u} \right|} ^2} + {\left| u \right|^2} + {\left| {{\partial _t}u} \right|^2} \le \mu_0.$$
\end{Proposition}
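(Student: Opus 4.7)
The plan is to upgrade the $L^2$ spatial localization of Proposition \ref{33} to the full $\mathcal{H}$-norm bound by combining it with the frequency localization of Lemma \ref{13} through a localized energy identity that exploits the damping. Fix $\mu_0>0$ and auxiliary scales $\mu_0\gg\mu_1\gg\mu_2\gg\mu_3>0$ to be tuned at the end. Set $N=1/c(\mu_3)$ from Lemma \ref{13}, so that $\|P_{\ge N}u\|_{H^1}+\|P_{\ge N}\partial_tu\|_{L^2}\le \mu_3$ for large $t$. Invoke Proposition \ref{33} with parameter $\mu_1$ to obtain piecewise-constant centres $\tilde x_j(t)$ and $\eta>0$ satisfying $\int_{D(x)>\eta^{-1}}|u(t)|^2\le \mu_1$ for $t$ large, where $D(x)=dist(x,\{\tilde x_j(t)\})$.

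Introduce a smooth cutoff $\phi_*$ equal to $1$ on $\{D>2\eta^{-1}\}$, supported in $\{D>\eta^{-1}\}$, with $|\nabla\phi_*|\le C\eta$. Because Step Five of the previous proof makes $\tilde x_j$ constant on each interval $I_{j+1}$, I take $\phi_*$ static on $I_{j+1}$. Multiplying (\ref{1}) by $\phi_*^2\partial_tu$ and integrating by parts yields
$$\frac{d}{dt}\mathcal{E}_{\phi_*}(t)=-2\alpha\int \phi_*^2(\partial_tu)^2\,dx-2\int \phi_*\nabla\phi_*\cdot\nabla u\,\partial_tu\,dx,$$
where $\mathcal{E}_{\phi_*}(t)=\int \phi_*^2\bigl(\tfrac12|\nabla u|^2+\tfrac12 u^2+\tfrac12(\partial_tu)^2-\tfrac{1}{p+1}|u|^{p+1}\bigr)dx$. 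Proposition \ref{local} ($\int_0^\infty\|\partial_tu\|_2^2\,ds<\infty$), the bound $|\nabla\phi_*|\le C\eta$, and Cauchy-Schwarz show that the integral of the right-hand side over $I_{j+1}$ is $o(1)$ as $t_*^j\to\infty$. Hence on $I_{j+1}$,
$$\mathcal{E}_{\phi_*}(t)\le \mathcal{E}_{\phi_*}(t_*^j)+o(1).$$

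It then remains to bound $\mathcal{E}_{\phi_*}(t_*^j)$ at the good time where $\|\partial_tu(t_*^j)\|_2\le \mu_2^2$ (see (\ref{pk})). The $(\partial_tu)^2$ and $u^2$ pieces are immediate from this and Proposition \ref{33}, and a subcritical H\"older-Sobolev interpolation against the uniform $H^1$ bound gives $\int\phi_*^2|u|^{p+1}\lesssim C(E)\|\phi_*u\|_2^{2\theta}$ for some $\theta=\theta(p,d)>0$, again small by Proposition \ref{33}. The essential new term is $\int\phi_*^2|\nabla u(t_*^j)|^2$. Decomposing $\nabla u=\nabla P_{<N}u+\nabla P_{\ge N}u$, the high-frequency part has $L^2$ bound $\mu_3$ by Lemma \ref{13}. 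For the low-frequency part, the kernel $K$ of $\nabla P_{<N}$ obeys $|K(x-y)|\lesssim N^{d+1}(1+N|x-y|)^{-M}$, so splitting $u$ via a slightly larger cutoff yields
$$\int\phi_*^2|\nabla P_{<N}u|^2\lesssim N^2\int_{D>\frac{1}{2}\eta^{-1}}|u|^2\,dx+(N\eta^{-1})^{-2(M-d)}\|u\|_2^2,$$
and the first term is controlled by Proposition \ref{33} at the much smaller scale $\mu_1\le \mu_0 N^{-2}$, while the tail is killed by taking $\eta^{-1}\gg N^{-1}$. Combined with the lower bound $\mathcal{E}_{\phi_*}\ge \tfrac12\int\phi_*^2(|\nabla u|^2+u^2+(\partial_tu)^2)-C(E)\|\phi_*u\|_2^{2\theta}$, this delivers the claimed $\mathcal{H}$-localization. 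The main obstacle is precisely this single-time estimate for $\int\phi_*^2|\nabla u|^2$: it forces Proposition \ref{33} to be invoked at the finer scale $\mu_1\sim\mu_0 N^{-2}$ (to absorb the Bernstein factor) and $\eta^{-1}$ to be enlarged (to kill the Littlewood-Paley tail), so the final $J$ in the statement is really $J(E,\mu_1(\mu_0))$, which is still a function of $(E,\mu_0)$ as required.
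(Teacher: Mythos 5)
Your approach is genuinely different from the paper's. The paper re-runs the machinery of Proposition \ref{33}: it puts a cutoff $\chi_1$ supported where $D(t^j_*)>\mu_4^{-4}$, applies Duhamel from time $t-\mu_1^{-1}$, discards the contribution from inside $D\le\mu_4^{-3}$ by finite speed of propagation, uses the exponential decay $e^{-\alpha\mu_1^{-1}}$ of $S_{1,\alpha},S_{2,\alpha}$ to kill the free-evolution part, and controls the remaining nonlinear Duhamel term via Strichartz together with the exterior $L^Q_tL^{2p}_x$ bound established as Claim 2 inside the proof of Proposition \ref{33}. You instead run a localized energy/multiplier argument: multiply (\ref{1}) by $\phi_*^2\partial_tu$, use the sign of the dissipative term and $\int_0^\infty\|\partial_tu\|_2^2\,dt<\infty$ so that $\mathcal{E}_{\phi_*}$ grows only by $o(1)$ over $I_{j+1}$, and then bound $\mathcal{E}_{\phi_*}$ at the good time $t^j_*$ by combining the $L^2$ localization (Proposition \ref{33}), the frequency localization (Lemma \ref{13}) and the smallness of $\|\partial_tu(t^j_*)\|_2$. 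The two routes exploit the damping differently -- the paper through the exponential decay of the damped linear propagator, you through the dissipation identity -- and your method avoids Claim 2 and the exterior Strichartz estimate entirely, at the cost of needing the single-time bound on $\int\phi_*^2|\nabla u(t^j_*)|^2$, obtained by a high/low frequency split and the rapid decay of the kernel of $\nabla P_{<N}$. That estimate is sound, and the interpolation $\int\phi_*^2|u|^{p+1}\lesssim C(E)\|\phi_*u\|_2^{2\theta}$ is justified by $\int(\phi_*u)^2|u|^{p-1}\le\|\phi_*u\|_{2r}^2\|u\|_{(p-1)s}^{p-1}$ followed by interpolation against $\|\phi_*u\|_{H^1}\lesssim C(E)$, admissible precisely because $p<1+\tfrac{4}{d-2}$.

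One flaw must be repaired. You declare the hierarchy $\mu_0\gg\mu_1\gg\mu_2\gg\mu_3$, set $N=1/c(\mu_3)$, and then demand $\mu_1\le\mu_0 N^{-2}$. As written this is circular: $N$ is determined by $\mu_3$, which you place \emph{after} $\mu_1$ in the hierarchy, and shrinking $\mu_3$ further only enlarges $N$ and tightens the constraint on $\mu_1$. But you do not actually need $\mu_3\ll\mu_1$ -- the high-frequency error only has to beat $\mu_0$. Re-order the choices: first pick a threshold $N=N(E,\mu_0)$ via Lemma \ref{13} so that $\|P_{\ge N}(u,\partial_tu)\|_{\mathcal{H}}\le\mu_0/100$ for large $t$; then set $\mu_1\le\mu_0 N^{-2}/100$ and apply Proposition \ref{33} with this $\mu_1$; finally choose $\mu_2\ll\mu_1$ for the good-time/energy-flux step. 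After this re-ordering all parameters, hence $\eta$ and $J$, depend only on $(E,\mu_0)$, as the statement requires, and the argument closes.
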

\begin{proof}
Choose $\mu_4\ll\mu_3\ll\mu_2\ll\mu_1\ll\mu_0$ as Proposition \ref{33}. Suppose that $t\in I_{j+1}$, then $0<t-t^j_*<4\mu_1^{-1}$.
Define $D(t)={dist(x,\{ {\tilde{x}_1}(t),...,{\tilde{x}_J}(t)\})}$. Then the proof of Proposition \ref{33} implies that for all $t\in I_{j+1}$,
\begin{align}\label{xcv}
1_{D(t)}=1_{D(t_*^j)}.
\end{align}
 Let $\chi_1$ be a cutoff function supported in $D(t^j_*)>\mu_4^{-4}$, which equals one in $D>\mu_4^{-5}$, with bound $|\nabla\chi_1|\lesssim \mu_4.$ Therefore we have
$$ \int_{D(t_*^j)> \mu _4^{ - 5}} {{{\left| {\nabla u(t)} \right|}^2} dx\le } {\left\| {u(t){\chi _1}} \right\|_{{H^1}}} + {\mu _4}.$$
Duhamel principle and finite speed of propagation give
\begin{align*}
u(t)\chi_1 &= {S_{1,\alpha }}(\mu _1^{ - 1}){1_{D(t_*^j) > \mu _4^{ - 3}}}u(t - \mu _1^{ - 1}) + {S_{2,\alpha }}(\mu _1^{ - 1}){1_{D(t_*^j) > \mu _4^{ - 3}}}{\partial _t}u(t - \mu _1^{ - 1}) \\
&+ \int_{t - \mu _1^{ - 1}}^t {{S_{2,\alpha }}(t - s) {{{\left| u \right|}^{p - 1}}u} } {1_{D(t_*^j) > \mu _4^{ - 3}}}(s)ds.
\end{align*}
By Strichartz estimates and exponential decay of ${S_{1,\alpha }},S_{2,\alpha}$, we get
\begin{align*}
{\left\| {u(t){\chi _1}} \right\|_{{H^1}}} \le C(E){e^{ - \alpha \mu _1^{ - 1}}}
+ {\left\| {{{\left| u \right|}^{p - 1}}u{1_{D(t_*^j) > \mu _4^{ - 3}}}} \right\|_{L_t^{{1}}((t - \mu _1^{ - 1},t);L_x^{2})}}.
\end{align*}
Then Claim 2, $(t-\mu^{-1}_{1},t)\subset (t^j_*-4\mu_1^{-1},t^j_*+4\mu_1^{-1})$, and (\ref{xcv}) imply,
\begin{align}\label{bounds}
\int_{D(t) > \mu _4^{ - 5}} {{{\left| {\nabla u(t)} \right|}^2} dx\le } {\mu _1},
\end{align}
which gives us the desired bound for $\nabla u(t)$.\\
Next, we prove the desired bound for $\partial_tu$.
By Duhamel principle, and finite speed of propagation, we obtain
\begin{align*}
{1_{D(t_*^j) > \mu _4^{ - 4}}}u(t) &= {S_{1,\alpha }}(t - t_*^j){1_{D(t_*^j) > \mu _4^{ - 3}}}u(t_*^j) + {S_{2,\alpha }}(t - t_*^j){1_{D(t_*^j) > \mu _4^{ - 3}}}{\partial _t}u(t_*^j)\\
&+ \int_{t_*^j}^t {{e^{ - \alpha (t - s)}}}S_{2,\alpha}(t-s) {1_{D(t_*^j) > \mu _4^{ - 3}}}{\left| {u(s)} \right|^{p - 1}}u(s)ds.
\end{align*}
Direct calculations yield
\begin{align*}
{\left\| {{\partial _t}\left[ {{1_{D(t_*^j) > \mu _4^{ - 4}}}u(t)} \right]} \right\|_2} \le&{\left\| {{1_{D(t_*^j) > \mu _4^{ - 3}}}u(t_*^j)} \right\|_{{H^1}}} + {\left\| {{1_{D(t_*^j) > \mu _4^{ - 3}}}{\partial _t}u(t_*^j)} \right\|_2} \\
&+ {\left\| {{1_{D(t_*^j) > \mu _4^{ - 3}}}{{\left| u \right|}^{p - 1}}u} \right\|_{L_t^{{1}}(I;L_x^{2})}},
\end{align*}
where $I=[t^j_*-\mu_1^{-1},t^j_*+\mu_1^{-1}]$. Then Claim 2, (\ref{pk})  and (\ref{bounds}) imply
$${\left\| {{\partial _t}\left[ {{1_{D(t^j_*) > \mu _4^{ - 4}}}u(t)} \right]} \right\|_2} \lesssim {\mu _1}.$$
Then the desire bound follows from (\ref{xcv}).
\end{proof}

From the proof of Proposition 10.1 in T. Tao \cite{TT}, Proposition \ref{22} is a corollary of Proposition \ref{44} and the following lemma.
\begin{Lemma}\label{gy}
Let $u$ be a global solution with $\mathcal{H}$ norm at most $E$. Suppose that we have the energy concentration bound
$$
\int_{|x-x_0|<R}|u(t_0,x)|^2+|\nabla u(t_0,x)|^2+|\partial_t u(t_0,x)|^2dx\ge \eta_1^2
$$
for some $x_0\in \Bbb R^d$, $t_0\in \Bbb R^+$, $R>0$, and sufficiently small $\eta_1>0$. Then, if $t_0$ is sufficiently large depending on $u,E,x_0,R.\eta_1$, we have the improved energy concentration
$$
\int_{|x-x_0|<R'}|u(t_0,x)|^2+|\nabla u(t_0,x)|^2+|\partial_t u(t_0,x)|^2dx\ge \beta(E),
$$
for some $\beta(E)>0$ independent of $\eta_1$ and some $R'$ depending on $E,R,\eta_1$.
\end{Lemma}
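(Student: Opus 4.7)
I plan to argue by contradiction. The ingredients are the dissipation $\int_0^\infty\|\partial_tu\|_2^2\,ds<\infty$ from Proposition~\ref{local}, the frequency localization of Lemma~\ref{13}, and the universal Nehari bound $\|W\|_{H^1}^2\ge c_0^2$ for any nontrivial $H^1$-solution of $-\Delta W+W=|W|^{p-1}W$ (obtained by pairing with $W$ and applying Sobolev, justified since the admissible parameter range forces $p+1<2^*$). Setting $\beta(E):=c_0^2/4$, if the conclusion failed one could extract sequences $t_n\to\infty$ and $R_n\to\infty$ such that $\int_{|x-x_0|<R}(|u|^2+|\nabla u|^2+|\partial_tu|^2)(t_n,x)\,dx\ge\eta_1^2$ while $\int_{|x-x_0|<R_n}(|u|^2+|\nabla u|^2+|\partial_tu|^2)(t_n,x)\,dx<\beta(E)$. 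The dissipation bound forces $\int_{t_n}^{t_n+1}\|\partial_tu\|_2^2\,ds\to 0$, and the uniform bound $\|u(t_n)\|_{H^1}\le E$ yields, along a subsequence, $u(t_n)\rightharpoonup V$ weakly in $H^1$ and (by Rellich, using $p+1<2^*$) strongly in $L^{p+1}_{loc}$.

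The crucial step is to show that $V$ satisfies the stationary equation $-\Delta V+V=|V|^{p-1}V$ distributionally. For this I would test the damped equation~(\ref{1}) against $\varphi(x)\psi(t)$ with $\varphi\in C_c^\infty(\Bbb R^d)$ and $\psi\in C_c^\infty((t_n,t_n+1))$ a fixed smooth cutoff with $\int\psi\,dt>0$, yielding after two integrations by parts in $t$
\[
\int_{t_n}^{t_n+1}\psi''(t)\langle u,\varphi\rangle\,dt-2\alpha\int_{t_n}^{t_n+1}\psi'(t)\langle u,\varphi\rangle\,dt+\int_{t_n}^{t_n+1}\psi(t)\langle u,-\Delta\varphi+\varphi\rangle\,dt=\int_{t_n}^{t_n+1}\psi(t)\langle|u|^{p-1}u,\varphi\rangle\,dt.
\]
The key observation is that $u(t_n+s')\rightharpoonup V$ in $H^1$ \emph{uniformly} in $s'\in[0,1]$: the bound $\|u(t_n+s')-u(t_n)\|_{L^2}\le\|\partial_tu\|_{L^2([t_n,t_n+1];L^2)}\to 0$, combined with uniform $H^1$-boundedness and uniqueness of weak limits, forces this. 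Since $\int\psi''\,dt=\int\psi'\,dt=0$, the first two integrals vanish in the limit, while the spatial terms converge to $(\int\psi)\langle V,-\Delta\varphi+\varphi\rangle$ and $(\int\psi)\langle|V|^{p-1}V,\varphi\rangle$ via the weak convergence and local strong $L^{p+1}$-convergence (the latter following from a subcritical interpolation bootstrap starting from $L^2$-convergence). Dividing by $\int\psi$ produces the elliptic equation for~$V$.

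To show $V\not\equiv 0$ I would exploit the frequency localization: for $\mu\ll\eta_1$, Lemma~\ref{13} provides $N=N(\mu)$ with $\|u_{>N}(t_n)\|_{H^1}\le\mu$ for large~$n$. The truncated part $u_{\le N}(t_n)$ is uniformly bounded in every $C^k$ by Bernstein, so Arzel\`a--Ascoli extracts a $C^1_{loc}$-limit~$\tilde V$, which by uniqueness of weak limits must equal $V-W_\mu$ with $W_\mu$ the weak $H^1$-limit of $u_{>N}(t_n)$ (itself satisfying $\|W_\mu\|_{H^1}\le\mu$). If $V\equiv 0$ then $\|\tilde V\|_{H^1}\le\mu$; the $C^1_{loc}$-convergence of $u_{\le N}(t_n)$, the high-frequency smallness, and the fact that $\|\partial_t u(t_n)\|_2\to 0$ along a further refinement of the sequence (using the time-averaged damping bound) combine to give $\int_{|x-x_0|<R}(|u|^2+|\nabla u|^2+|\partial_tu|^2)(t_n)\le C\mu^2+o(1)$, contradicting the $\eta_1^2$-bound for small enough $\mu$. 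Finally, nontriviality of $V$ gives $\|V\|_{H^1}^2\ge c_0^2$, while weak lower semicontinuity of the $H^1$-norm combined with the hypothesis yields $\int_{|x-x_0|<R'}(|V|^2+|\nabla V|^2)\le\beta(E)$ for each fixed $R'$ (taking $n$ large so that $R_n\ge R'$); letting $R'\to\infty$ gives $\|V\|_{H^1}^2\le c_0^2/4$, the desired contradiction.

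The main obstacle is the passage to the limit in the damped equation to obtain a purely stationary equation for~$V$: the temporal derivatives do not vanish pointwise, so one must use a time-averaged (or cutoff) formulation. Everything rests essentially on the damping's dissipation bound from Proposition~\ref{local}, which simultaneously eliminates both the inertial and the first-order temporal terms, and on the subcriticality $p+1<2^*$ needed for the Rellich-type compactness that handles the nonlinearity.
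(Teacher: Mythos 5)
Your route is genuinely different from the paper's. The paper reduces the statement to an ``energy increment'' lemma (Lemma 3.6): if the energy in $|x-x_0|<R$ were trapped between $\eta_1^2$ and a small $\beta$ without a definite gain $\eta_4^2$ in a larger ball, then the annulus $\{R<|x-x_0|<R'\}$ has negligible energy, so the cut-off datum $1_{|x|<R'}(u,\partial_t u)(t_0)$ is $\mathcal H$-small, and Proposition~\ref{local} together with finite speed of propagation forces exponential decay of the interior energy on $[t_0,t_0+\eta_3^{-1}]$; this is played against a quantitative ``slowly-varying localized energy'' estimate $\bigl|\int\psi e_\mu(t)-\int\psi e_\mu(t_0)\bigr|\lesssim\int(\|\partial_t u\|_2^2+\|\partial_t u\|_2)\,dt$, which keeps the interior energy $\gtrsim\eta_1^2$ once $t_0$ is large. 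You instead extract a weak $H^1$ limit $V$ along a contradiction sequence, pass to the stationary elliptic equation by time-averaging against $\varphi\psi$, invoke a Nehari lower bound $\|V\|_{H^1}\ge c_0$, and contradict it through weak lower semicontinuity. Your use of the Nehari threshold as the explicit $\beta(E)$ is a nice structural idea, and the passage to the stationary equation via $\int\psi'=\int\psi''=0$ is sound given the uniform $L^2$-control $\|u(t_n+s)-u(t_n)\|_{L^2}\le\|\partial_t u\|_{L^1([t_n,t_n+1];L^2)}\to0$.

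The gap is in the nontriviality step for $V$. You claim ``$\|\partial_t u(t_n)\|_2\to 0$ along a further refinement of the sequence (using the time-averaged damping bound),'' but $\int_0^\infty\|\partial_t u\|_2^2\,dt<\infty$ does \emph{not} let you pass to a subsequence of an arbitrary given sequence $t_n\to\infty$ along which $\|\partial_t u(t_n)\|_2\to0$: the integrability only excludes sets of positive measure, and nothing prevents all of your (contradiction-supplied) times $t_n$ from landing precisely on a set of measure zero where $\|\partial_t u\|_2\gtrsim\eta_1$. If instead you shift to a nearby good time $t_n'\in[t_n,t_n+1]$ with $\|\partial_t u(t_n')\|_2$ small, you must then transfer both the concentration hypothesis $\ge\eta_1^2$ and the failed conclusion $<\beta$ from $t_n$ to $t_n'$, and that transfer is exactly the slowly-varying localized energy estimate that is the heart of the paper's argument; as written your proposal does not supply it. (A self-contained repair is possible: use the $\partial_t u$ frequency localization~(\ref{kj}) plus Rellich to upgrade weak $L^2$ convergence of $\partial_t u(t_n)$ to strong $L^2_{\mathrm{loc}}$ convergence, and show the weak limit vanishes by noting that $t\mapsto\langle\partial_t u(t),\varphi\rangle$ is uniformly Lipschitz for $\varphi\in C^\infty_c$ by the equation, so a nonzero limit would make $\int_{t_n}^{t_n+1}\langle\partial_t u,\varphi\rangle^2\,dt$ bounded below, contradicting dissipation. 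But neither of these ingredients is in your text, and the line you wrote is, as stated, false.)
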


The proof of Lemma \ref{gy} can be reduced to the following lemma.
\begin{Lemma}
Given $E>0$, $\eta_1>0$ sufficiently small, there exists $\beta>0$ with the following property:
Suppose that we have the energy concentration bound
$$
\eta_1^2\le \int_{|x-x_0|<R}{\left| {\nabla u}(t_0,x) \right|} ^2 + {\left| u(t_0,x) \right|^2} + \left| {{\partial _t}u(t_0,x)} \right|^2\le \beta
$$
for some $x_0\in \Bbb R^d$, $t_0\in \Bbb R^+$, $R>0$, and some global solution $u$ with $\mathcal{H}$ norm at most $E$. Then, if $t_0$ is sufficiently large depending on $E,x_0,R, \eta_1$, we have
$$
\int_{|x-x_0|<R'}{\left| {\nabla u(t_0,x)} \right|} ^2 + {\left| u (t_0,x)\right|^2} + \left| {{\partial _t}u(t_0,x)} \right|^2\ge \int_{|x-x_0|<R}{\left| {\nabla u(t_0,x)} \right|} ^2 + {\left| u (t_0,x)\right|^2} + \left| {{\partial _t}u(t_0,x)} \right|^2+\eta_4^2,
$$
for some $\eta_4(E,\eta_1)>0$ and $R'(E,R,\eta_1,\eta_4)$.
\end{Lemma}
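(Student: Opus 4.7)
My strategy is to argue by contradiction using the small-data exponential decay provided by Proposition~\ref{local}. First I fix parameters: choose $\beta=\beta(E)>0$ and $\eta_4=\eta_4(E,\eta_1)>0$ so that $\beta+\eta_4^2<\epsilon^2$, where $\epsilon$ is the small-data threshold in Proposition~\ref{local}; a concrete choice is $\beta=\epsilon^2/4$ together with $\eta_4=\epsilon/2$, in which case the ``sufficiently small $\eta_1$'' clause of the lemma becomes simply $\eta_1^2\le\beta$. Suppose, for contradiction, that no $T_0$ suffices for the conclusion; then there exists a sequence $t_n\to\infty$ such that, at each $t_n$, the hypothesis $\eta_1^2\le\int_{|x-x_0|<R}(\cdots)(t_n)\le\beta$ holds, yet for every finite $R'>R$,
\[
\int_{|x-x_0|<R'}\bigl(|\nabla u|^2+|u|^2+|\partial_t u|^2\bigr)(t_n)\,dx<\int_{|x-x_0|<R}\bigl(\cdots\bigr)(t_n)\,dx+\eta_4^2.
\]
Sending $R'\to\infty$ in this inequality and combining with $\int_{|x-x_0|<R}(\cdots)(t_n)\le\beta$ yields the total-norm bound $\|u(t_n)\|_{\mathcal H}^2\le\beta+\eta_4^2$.

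\textbf{Closing the argument via exponential decay.} Since $\|u(t_n)\|_{\mathcal H}<\epsilon$ and~(\ref{1}) is autonomous, Proposition~\ref{local} applies with $t_n$ as the new initial time, yielding
\[
\|u(t)\|_{\mathcal H}\le C\,e^{-\gamma(t-t_n)}\sqrt{\beta+\eta_4^2}\qquad\text{for all }t\ge t_n.
\]
After passing to a subsequence with $t_{n+1}-t_n\to\infty$, evaluating this estimate at $t=t_{n+1}$ forces $\|u(t_{n+1})\|_{\mathcal H}\to 0$. On the other hand, the lower hypothesis at $t_{n+1}$ gives $\|u(t_{n+1})\|_{\mathcal H}^2\ge\int_{|x-x_0|<R}(\cdots)(t_{n+1})\ge\eta_1^2$, which contradicts the convergence to $0$ for all sufficiently large $n$.

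\textbf{Main obstacle.} The conceptual engine is that the failure of the conclusion dumps the entire $\mathcal H$-norm of $u(t_n)$ into the small-data regime, whereupon Proposition~\ref{local} forces exponential decay to $0$---incompatible with the persistent lower bound $\|u(t_n)\|_{\mathcal H}\ge\eta_1$. The main technical point requiring care is that the decay constants $C,\gamma$ in Proposition~\ref{local} are independent of the choice of initial time, which is granted by the time-translation invariance of the autonomous equation~(\ref{1}); this ensures the inequality displayed above holds uniformly in $n$. A secondary technical detail is extracting a finite admissible $R'$ in the forward direction: once it is established that the exterior mass of $\{|x-x_0|<R\}$ at time $t_0$ exceeds $\eta_4^2$, monotone convergence for the nonnegative integrand produces a concrete annulus $R<|x-x_0|<R'$ carrying at least $\eta_4^2$ of that mass, giving the dependence $R'=R'(E,R,\eta_1,\eta_4)$ stated in the lemma.
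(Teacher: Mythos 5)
Your proposed proof has a gap at the negation step, and the gap is fatal for the route you chose. The lemma asserts the existence of $\eta_4$ and $R'$ --- both independent of $t_0$ --- such that the conclusion holds for all sufficiently large $t_0$. Its correct negation is therefore: for every candidate pair $(\eta_4,R')$ there exist arbitrarily large times $t_0$ at which the conclusion (with that specific $R'$) fails, and the bad times may depend on $R'$. You instead posit a single sequence $t_n\to\infty$ on which the conclusion fails simultaneously for \emph{every} $R'>R$; that is precisely what licenses sending $R'\to\infty$ to obtain $\|u(t_n)\|_{\mathcal H}^2\le\beta+\eta_4^2$. That interchange of quantifiers is not justified. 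Concretely, a scenario consistent with the correct negation but not with your stronger assumption is that at time $t_n$ a substantial part of the energy sits at distance $\sim R'_n\to\infty$ from $x_0$: for any fixed $R'$ the annulus $R<|x-x_0|<R'$ eventually carries less than $\eta_4^2$, yet $\|u(t_n)\|_{\mathcal H}$ stays of order $E$, so Proposition \ref{local} never activates and no contradiction is produced. The same obstruction undermines your closing remark: knowing $\int_{|x-x_0|>R}e(t_0,x)\,dx\ge\eta_4^2$ does not yield a bounded annulus carrying $\eta_4^2$ of mass \emph{uniformly} in $t_0$, since the radius at which the exterior mass accumulates can drift to infinity as $t_0$ varies, and the lemma explicitly requires $R'=R'(E,R,\eta_1,\eta_4)$.

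The paper avoids exactly this issue by localizing before invoking smallness. It fixes a concrete $R'>R_0$ with $R_0>\max(32R,\,32/\eta_3)$, truncates the data to the ball of radius $R'$ about $x_0$ (the truncated data then \emph{does} have small $\mathcal H$-norm once the annulus $R<|x-x_0|<R'$ carries little energy, regardless of what happens near infinity), applies Proposition \ref{local} to the truncated solution $\tilde u$, and uses finite speed of propagation to identify $\tilde u$ with $u$ on a shrinking cone, forcing the local energy in $\{|x-x_0|<R'/2\}$ to decay to order $\eta_3$ by time $t_0+\eta_3^{-1}$. The contradiction then comes not from the global lower bound $\|u(t)\|_{\mathcal H}\ge\eta_1$, but from an almost-conservation estimate (the inequality labelled (\ref{cv}) in the paper) showing that the local energy in $\{|x-x_0|<R'/2\}$ must stay $\gtrsim\eta_1^2$ throughout $[t_0,t_0+\eta_3^{-1}]$; this uses the frequency localization of Lemma \ref{13} together with the smallness of $\int_{t_0}^{t_0+\eta_3^{-1}}\|u_t\|_2^2\,dt$ for $t_0$ large, a consequence of Proposition \ref{local}. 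These ingredients --- spatial truncation, finite speed of propagation, and local almost-conservation of energy --- are what produce an $R'$ uniform in $t_0$, and they have no counterpart in your proposal.
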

\begin{proof}
For simplicity, define $e={\left| {\nabla u(t_0,x)} \right|} ^2 + {\left| u (t_0,x)\right|^2} + \left| {{\partial _t}u(t_0,x)} \right|^2$.
Fix $E>0$, let $\beta>0$ be a sufficiently small quantity to be determined. Choose parameters
$\eta_1\gg\eta_2\gg\eta_3\gg\eta_4>0$. Let $R_0>max(32R, \frac{32}{\eta_3})$. Suppose by contradiction that there exists $R'>R_0$ such that our claim fails, then
$$\int_{R<|x-x_0|<R'}e(t_0,x)dx\lesssim \eta_4^2,$$
especially, we have
$$
\int_{|x|<R'}e(t_0,x)\lesssim \beta.
$$
Choose $\beta<\epsilon$, where $\epsilon$ is the constant in Proposition \ref{local}. Denote the solution of (1.1) with initial data  $1_{|x|<R'}u(t_0,x)$ at $t_0$. Proposition \ref{local} implies
\begin{align}\label{fg}
\|\tilde{u}\|_{\mathcal{H}}\le e^{-\gamma (t-t_0)}\|\tilde{u}(t_0)\|_{\mathcal{H}}\lesssim \beta .
\end{align}
Finite speed of propagation implies
$$u(x,t)=\tilde{u}(x,t) \mbox{  }{\rm{in}}\mbox{   } \{(x,t):|x|<R'-|t-t_0|\}.
$$
Consider a time interval $I=[t_0,t_0+\eta_3^{-1}]$. Then for $t\in I$, we have
$$\int_{|x|<R'/2}e(t,x)dx=\int_{|x|<R'/2}\tilde{e}(t,x)dx.$$
Combining with (\ref{fg}), we have verified
$$
\int_{|x|<R'/2}e(t+\eta_3^{-1},x)dx\lesssim e^{-\eta_3^{-1}}\beta\lesssim \eta^3.
$$
If we have obtained
\begin{align}\label{cv}
\inf_{t\in I}\int_{|x|<R'/2}e(t,x)\ge \eta_1^2,
\end{align}
then contradiction follows. Hence, it suffices to prove (\ref{cv}). By Lemma 3.1, there exists $\mu>0$, $T_0>0$, such that for any $t>T_0$,
$$
\|P_{>\mu^{-1}}u\|_{H^1}+\|P>{\mu^{-1}}\partial_tu\|_{L^2}\le \eta_1^6.
$$
Hence it suffices to prove
$$
\mathop {\inf }\limits_{t \in I} {\int_{\left| x \right| < R'/2} {\left| {{P_{ \le {\mu ^{ - 1}}}}u} \right|} ^2} + {\left| {\nabla {P_{ \le {\mu ^{ - 1}}}}u} \right|^2} + {\left| {{\partial _t}{P_{ \le {\mu ^{ - 1}}}}u} \right|^2}dx \gtrsim \eta _1^2.
$$
Let $\psi(x)$ be a smooth cutoff function which equals 1 in $\{|x|<R'/4\}$, vanishes when $|x|>R'/2$, with bound $|\nabla \psi(x)|=O(R'^{-1})$, then
\begin{align*}
&\frac{d}{{dt}}\int_{{\Bbb R^d}} {\psi (x)} \left[ {{{\left| {{P_{ \le {\mu ^{ - 1}}}}u} \right|}^2} + {{\left| {\nabla {P_{ \le {\mu ^{ - 1}}}}u} \right|}^2} + {{\left| {{P_{ \le {\mu ^{ - 1}}}}{u_t}} \right|}^2}} \right]dx \\
&= 2\int_{{\Bbb R^d}} {\psi (x)} \left( {{P_{ \le {\mu ^{ - 1}}}}{u_t}} \right){P_{ \le {\mu ^{ - 1}}}}h(u) - 4\alpha \int_{{\Bbb R^d}} {\psi (x)} {\left| {{P_{ \le {\mu ^{ - 1}}}}{u_t}} \right|^2}dx - 2\int_{{\Bbb R^d}} {\nabla \psi (x)} \left( {\nabla {P_{ \le {\mu ^{ - 1}}}}u} \right)\left( {{P_{ \le {\mu ^{ - 1}}}}{u_t}} \right)dx.
\end{align*}
Define
$${e_\mu }(t) = \left| {{P_{ \le {\mu ^{ - 1}}}}u} \right| + {\left| {\nabla {P_{ \le {\mu ^{ - 1}}}}u} \right|^2} + {\left| {{\partial _t}{P_{ \le {\mu ^{ - 1}}}}u} \right|^2}.$$
H\"older's inequality yield,
\begin{align*}
&\left| {\int_{{\Bbb R^d}} {\psi (x)} {e_\mu }(t)dx - \int_{{\Bbb R^d}} {\psi (x)} {e_\mu }({t_0})dx} \right| \\
&\le \int_{{t_0}}^{{t_0} + 1/{\eta _3}} {\left| {\frac{d}{{dt}}\int_{{\Bbb R^d}} {\psi (x)} {e_\mu }dx} \right|dt}  \\
&\le C\int_{{t_0}}^{{t_0} + 1/{\eta _3}} {{{\left\| {{u_t}} \right\|}_2}{{\left\| {{P_{ \le {\mu ^{ - 1}}}}h(u)} \right\|}_2} + \left\| {{u_t}} \right\|_2^2 + {{\left\| {{u_t}} \right\|}_2}{{\left\| {\nabla u} \right\|}_2}dtdt}.
\end{align*}
For $d\ge3$, $1<p\le \frac{d}{d-2}$, Sobolev embedding theorem implies ${{{\left\| {{P_{ \le {\mu ^{ - 1}}}}h(u)} \right\|}_2}}\le C(E)$, thus
\begin{align}\label{guf}
\left| {\int_{{\Bbb R^d}} {\psi (x)} {e_\mu }(t)dx - \int_{{\Bbb R^d}} {\psi (x)} {e_\mu }({t_0})dx} \right| \le C(E,\mu)\int_{{t_0}}^{{t_0} + 1/{\eta _3}} {\left( {\left\| {{u_t}} \right\|_2^2 + {{\left\| {{u_t}} \right\|}_2}} \right)} dt.
\end{align}
For $d\ge3$, $\frac{d}{d-2}<p<1+\frac{4}{d-2}$, by Bernstein's inequality,
$${\left\| {{P_{ \le {\mu ^{ - 1}}}}h(u)} \right\|_2} \le {\mu ^{ - d\left( {\frac{p}{{{2^*}}} - \frac{1}{2}} \right)}}{\left\| {{P_{ \le {\mu ^{ - 1}}}}h(u)} \right\|_{\frac{{{2^*}}}{p}}} \le {\mu ^{ - d\left( {\frac{p}{{{2^*}}} - \frac{1}{2}} \right)}}C(E),$$
which yields (\ref{guf}) again.
For $d=1,2$, (\ref{guf}) can be obtained directly by Sobolev embedding theorem.
Since ${\int_0^\infty  {\left\| {{u_t}} \right\|_2^2dt}  < \infty }$,
choose $t_0$ sufficiently large such that
$$\int_{{t_0}}^{{t_0} + 1/{\eta _3}} {\left\| {{u_t}} \right\|_2^2} dt \le \sqrt {{\eta _3}} {\rm{ }}{\mu ^{d\left( {\frac{p}{{{2^*}}} - \frac{1}{2}} \right)}}\eta _1^6,
$$
then
$$\int_{|x| < R'/2} {e_{\mu}(t)} dx \ge \int_{{\Bbb R^d}} {\psi (x)e_{\mu}(t)} dx \ge \int_{{\Bbb R^d}} {\psi (x)e_{\mu}({t_0})} dx -\sqrt {{\eta _3}} \eta _1^3\gtrsim \eta _1^2,$$
thus proving (\ref{cv}), from which our lemma follows.
\end{proof}

\section{Concentration compact attractor}
In this section, we first derive the global attractor, then we prove Theorem 1.1 immediately.
\subsection{Concentration-compactness attractor}
We recall the following criterion for compact attractors proved by Proposition B.2 in Tao \cite{TT}.
\begin{Proposition}\label{kkkk}
Let $\mathcal{U}$ be a collection of trajectories $u:\Bbb R^+\to \mathcal{H}$. If $\mathcal{U}$ is bounded in $\mathcal{H}$, and for any $\mu_0>0$ there exists $\mu_1>0$ such that
\begin{align*}
&\mathop {\lim \sup }\limits_{t \to \infty } {\left\| {{P_{ > 1/{\mu _1}}}u(t)} \right\|_\mathcal{H}} \le {\mu _0}, \\
&\mathop {\lim \sup }\limits_{t \to \infty } {\int_{|x| > 1/{\mu _1}} {\left| {u(x,t)} \right|} ^2} + {\left| {\nabla u(x,t)} \right|^2} + {\left| {{\partial _t}u(x,t)} \right|^2}dx \le {\mu _0}.
\end{align*}
Then there exists a compact set $K\subset\mathcal{H}$ such that $\mathop {\lim }\limits_{t \to \infty }dist_{\mathcal{H}}(u(t),K)=0$.
\end{Proposition}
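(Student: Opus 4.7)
The plan is to apply the Fr\'echet--Kolmogorov compactness criterion to the Hilbert space $\mathcal{H}=H^1(\Bbb R^d)\times L^2(\Bbb R^d)$. A bounded subset of $\mathcal{H}$ is precompact if and only if its high-frequency tails are uniformly small and its spatial tails are uniformly small; the two hypotheses of the proposition are designed to supply exactly these two properties asymptotically in $t$. Accordingly, the attractor should be definable as the closed set cut out by the same localization inequalities, imposed uniformly at every scale.

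Concretely, fix once and for all a function $\mu_0\mapsto\mu_1(\mu_0)$ supplied by the hypothesis, let $E$ be a uniform $\mathcal{H}$-bound on $\mathcal{U}$, and define $K\subset\mathcal{H}$ to be the set of pairs $v=(v_0,v_1)$ with $\|v\|_\mathcal{H}\le E$ satisfying, for every $\mu_0>0$,
$$\|P_{>1/\mu_1(\mu_0)}v\|_\mathcal{H}\le\mu_0\quad\text{and}\quad\int_{|x|>1/\mu_1(\mu_0)}\!\bigl(|v_0|^2+|\nabla v_0|^2+|v_1|^2\bigr)\,dx\le\mu_0.$$
Each constraint is a closed condition on $\mathcal{H}$, since $P_{>N}$ and the spatial cut-off are bounded operators, so $K$ is closed, bounded, uniformly frequency-localized, and uniformly space-tight; Fr\'echet--Kolmogorov then forces $K$ to be compact. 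For the attraction statement, fix $u\in\mathcal{U}$ and assume towards contradiction that $\mathrm{dist}_\mathcal{H}(u(t_n),K)\ge\varepsilon$ for some $\varepsilon>0$ and some sequence $t_n\to\infty$. The two $\limsup$ hypotheses guarantee that, for each fixed $\mu_0>0$, all but finitely many of the $u(t_n)$ satisfy the localization bounds with $2\mu_0$ in place of $\mu_0$; absorbing the finite exceptional set using the individual tightness of each $u(t_n)\in\mathcal{H}$, the whole sequence is precompact in $\mathcal{H}$ by Fr\'echet--Kolmogorov. Extracting $u(t_n)\to w$ in $\mathcal{H}$ and passing to the limit in each defining inequality (using continuity of $P_{>N}$ and of the spatial cut-offs on $\mathcal{H}$, and noting that the hypothesis value $\mu_0$ is strictly sharper than the $2\mu_0$ used to establish compactness) yields $w\in K$, contradicting the separation.

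The main obstacle is purely organisational: the hypothesis furnishes localization asymptotically in $t$ for each scale $\mu_0$ separately, whereas the members of $K$ must satisfy the localization simultaneously at every scale. This is resolved by absorbing the universal quantifier ``for every $\mu_0>0$'' into the definition of $K$, which is legitimate because the map $\mu_0\mapsto\mu_1(\mu_0)$ can be fixed once before the analysis, and by noting that the scale-dependent threshold $N(\mu_0)$ only excludes finitely many terms from the sequence $\{u(t_n)\}$, which is harmless for the Fr\'echet--Kolmogorov precompactness argument.
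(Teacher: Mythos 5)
Your proof is correct, and it uses the standard Fr\'echet--Kolmogorov (Rellich-type) compactness criterion on $H^1\times L^2$, which is exactly the route taken in the reference the paper cites --- the paper itself does not reprove this proposition but simply invokes Proposition B.2 of Tao \cite{TT}, whose argument is the same: the attractor is the set of states obeying the uniform-in-scale frequency- and space-tightness bounds, compactness of that set follows from the Riesz/Fr\'echet--Kolmogorov criterion, and attraction follows by extracting a convergent subsequence from any putative non-attracted time sequence and identifying the limit as an element of the attractor. Two small points worth tightening in your write-up: (i) the $2\mu_0$ versus $\mu_0$ bookkeeping is unnecessary, since $\limsup_{t\to\infty}\|P_{>1/\mu_1}u(t)\|_{\mathcal H}\le\mu_0$ already gives $\|P_{>1/\mu_1}u(t_n)\|_{\mathcal H}\le\mu_0+\delta_n$ with $\delta_n\to0$, and passing to the limit along the convergent subsequence yields the clean bound $\le\mu_0$ for $w$ directly; (ii) for the argument to produce a single compact $K$ attracting every trajectory in $\mathcal U$, the map $\mu_0\mapsto\mu_1(\mu_0)$ must be chosen uniformly over $u\in\mathcal U$, which is how the quantifiers are meant to be read (cf.\ the explicit quantification in Proposition \ref{pph}); you do fix this map once, so your argument is fine, but it is worth stating the uniformity explicitly since it is what makes a single $K$ suffice.
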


\begin{Proposition}\label{pph}
Let $\mathcal{U}$ be a collection of trajectories $u:\Bbb R^+\to \mathcal{H}$, and let $J\ge1$. If $\mathcal{U}$ is bounded in $\mathcal{H}$, and for any $\mu_0>0$ there exists $\mu_1>0$ such that for every $u\in \mathcal{U}$ we have $x_1,...,x_J:\Bbb R^+\to\Bbb R^d$ for which
\begin{align*}
&\mathop {\lim \sup }\limits_{t \to \infty } {\left\| {{P_{ > 1/{\mu _1}}}u(t)} \right\|_\mathcal{H}} \le {\mu _0}, \\
&\mathop {\lim \sup }\limits_{t \to \infty } {\int_{dist\left( {x,\left\{ {{x_1}(t),{x_2}(t),...,{x_J}(t)} \right\}} \right) > 1/{\mu _1}} {\left| {u(x,t)} \right|} ^2} + {\left| {\nabla u(x,t)} \right|^2} + {\left| {{\partial _t}u(x,t)} \right|^2}dx \le {\mu _0}.
\end{align*}
Then there exists a G-precompact set $K\subset\mathcal{H}$ with $J$ components such that $\mathop {\lim }\limits_{t \to \infty }dist_{\mathcal{H}}(u(t),K)=0$.
\end{Proposition}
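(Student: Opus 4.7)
The plan is to reduce Proposition \ref{pph} to the one-center criterion of Proposition \ref{kkkk} by decomposing each trajectory into $J$ localized pieces, translating each piece so that its center lies at the origin, and applying Proposition \ref{kkkk} to the resulting family. The compact set $K$ obtained this way will serve directly as the compact component of the $G$-precompact attractor, since by construction each $u(t)$ is approximated by a sum of $J$ translates of elements of $K$.

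To carry this out, fix a reference cutoff $\psi\in C_c^\infty(\mathbb{R}^d)$ with $\psi\equiv 1$ on $\{|y|\le 1\}$ and $\mathrm{supp}\,\psi\subset\{|y|\le 2\}$. Given $\mu_0$ and the associated $\mu_1$ from the hypothesis, set $\chi_j(t,x):=\psi\bigl(\mu_1(x-x_j(t))\bigr)$, $\Phi(t,x):=\max\bigl(\sum_{j=1}^J\chi_j(t,x),1\bigr)$, and $\tilde\chi_j:=\chi_j/\Phi$. Then $\tilde\chi_j\ge 0$, $\mathrm{supp}\,\tilde\chi_j(t,\cdot)\subset B(x_j(t),2/\mu_1)$, $|\nabla\tilde\chi_j|\lesssim\mu_1$, and $\sum_j\tilde\chi_j(t,x)=1$ whenever $\mathrm{dist}(x,\{x_i(t)\})\le 1/\mu_1$. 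Define the translated pieces
\[
v_j^u(t):=\tau_{-x_j(t)}\bigl(\tilde\chi_j(t,\cdot)u(t),\,\tilde\chi_j(t,\cdot)\partial_t u(t)\bigr),\qquad 1\le j\le J,
\]
and collect them into $\mathcal{V}:=\{v_j^u:u\in\mathcal{U},\,1\le j\le J\}$. I would then verify the hypotheses of Proposition \ref{kkkk} for $\mathcal{V}$: uniform $\mathcal{H}$-boundedness follows from the product rule together with $|\tilde\chi_j|\le 1$ and $|\nabla\tilde\chi_j|\lesssim\mu_1$; spatial localization at the origin is automatic because $v_j^u(t)$ is supported in $\{|x|\le 2/\mu_1\}$; and frequency localization is obtained, for any prescribed tolerance $\mu_0'$, by choosing a new parameter $\mu_1'$ corresponding to some $\mu_1''$ in the hypothesis on $\mathcal{U}$ and using the paraproduct decomposition $P_{\ge N}(\tilde\chi_j u)=P_{\ge N}(\tilde\chi_j P_{<N/2}u)+P_{\ge N}(\tilde\chi_j P_{\ge N/2}u)$ to separate a harmless low-high term (controlled by the Schwartz decay of $\hat{\tilde\chi_j}$ at scale $\mu_1$) from a term controlled by the high-frequency tail of $u$, which is small by hypothesis.

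Having checked the hypotheses, Proposition \ref{kkkk} yields a compact $K\subset\mathcal{H}$ with $\mathrm{dist}_\mathcal{H}(v_j^u(t),K)\to 0$ as $t\to\infty$. Writing
\[
\bigl(u(t),\partial_t u(t)\bigr)=\sum_{j=1}^J\tau_{x_j(t)}v_j^u(t)+r(t),
\]
where the remainder $r(t)=\bigl(1-\sum_j\tilde\chi_j(t,\cdot)\bigr)(u(t),\partial_t u(t))$ is supported in $\{\mathrm{dist}(x,\{x_i(t)\})>1/\mu_1\}$, the spatial localization hypothesis (applied along a sequence $\mu_0\to 0$ with $\mu_1=\mu_1(\mu_0)$) forces $\|r(t)\|_\mathcal{H}\to 0$. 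Consequently $\mathrm{dist}_\mathcal{H}\bigl(u(t),J(GK)\bigr)\to 0$, which is exactly the $G$-precompactness with $J$ components. The main obstacle is the frequency-localization verification for $\mathcal{V}$: multiplication by $\tilde\chi_j$ can in principle spread frequencies, and one must pick the cutoff scale in tandem with the Littlewood--Paley cutoff scale and then handle the low-high versus high-high pieces by a Bernstein-type estimate. Once this quantifier choice is made carefully, the rest of the argument is a clean reduction to the single-center case.
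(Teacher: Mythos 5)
Your overall strategy---split $u$ into $J$ localized pieces centered at the $x_j(t)$, recenter, and reduce to the one-center criterion of Proposition~\ref{kkkk}---is exactly the paper's (following Proposition B.3 of Tao). However, there is a genuine gap in the passage from the pieces back to a single compact attractor.

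Because your cutoffs $\tilde\chi_j$ are compactly supported, they fail to sum to $1$ off $\{\mathrm{dist}(x,\{x_i(t)\})\le 1/\mu_1\}$, leaving a remainder $r(t)$. For a \emph{fixed} pair $(\mu_0,\mu_1)$ used to build the cutoffs, the hypothesis only gives $\limsup_{t\to\infty}\|r(t)\|_{\mathcal H}\lesssim\mu_0$, not $\to 0$. Your remark that one should "apply along a sequence $\mu_0\to0$" hides the real difficulty: shrinking $\mu_0$ forces $\mu_1\to 0$, so the supports of the $\tilde\chi_j$ widen, the family $\mathcal{V}$ changes, and the compact $K$ that Proposition~\ref{kkkk} hands back changes with it. You end up with a sequence of compacta $K_n$ satisfying $\limsup_{t}\mathrm{dist}_{\mathcal H}(u(t),J(GK_n))\le 1/n$, but $\bigcup_n K_n$ need not be precompact, so no single attractor has been produced.

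The paper avoids this entirely by choosing the partition of unity
\[
\psi_{j,t}(x)=\frac{\langle x-x_j(t)\rangle^{-1}}{\sum_{l=1}^J\langle x-x_l(t)\rangle^{-1}},
\]
which sums to $1$ \emph{identically} on $\Bbb R^d$, so $(u,\partial_tu)(t)=\sum_j\tau_{x_j(t)}(w_j,v_j)(t)$ with \emph{no} remainder, and the pieces $(w_j,v_j)$ do not depend on $\mu_0$ or $\mu_1$. A single application of Proposition~\ref{kkkk} then yields one compact $K$, and $\mathrm{dist}_{\mathcal H}(u(t),J(GK))\to 0$ directly. The price is that $\psi_{j,t}$ is not compactly supported, so the spatial localization of the pieces requires a small argument: for $|x|>1/\eta$ the recentered point $y=x+x_j(t)$ is either far from all $x_l(t)$ (use the hypothesis) or near some $x_k(t)$, $k\ne j$, in which case $\psi_{j,t}(y)\lesssim\eta/\mu_1$, which is small once $\eta\ll\mu_1$. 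A further, more minor, defect of your construction: $\Phi=\max(\sum_l\chi_l,1)$ is only Lipschitz, so $\tilde\chi_j$ is not $C^1$ and the commutator/paraproduct bound you invoke for the $H^1$ part of the frequency localization degrades; the smooth quotient $\psi_{j,t}$ (with uniformly bounded derivatives of all orders) avoids that problem as well.
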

\begin{proof} Although the proof is almost the same as proposition B.3 of T. Tao \cite{TT}, for reader's convenience, we give a sketch here.
We use the partition of unity
$$
 1 = \sum\limits_{j = 1}^J {{\psi _{j,t}}(x)},
$$
where
$$
 {\psi _{j,t}}(x) \equiv \frac{{{{\left\langle {x - {x_j}(t)} \right\rangle }^{ - 1}}}}{{\sum\limits_{l = 1}^J {{{\left\langle {x - {x_l}(t)} \right\rangle }^{ - 1}}} }}.
$$
Split $(u(x,t),\partial_tu(x,t))$ as
\begin{align}\label{oi}
u(t) = \sum\limits_{j = 1}^J {{\tau _{{x_j}(t)}}} {w_j}(t),{\rm{   }}{\partial _t}u(t) = \sum\limits_{j = 1}^J {{\tau _{{x_j}(t)}}} {v_j}(t),
\end{align}
where
$${w_j}(t) = {\tau _{ - {x_j}(t)}}{\psi _{j,t}}(x)u(t),\mbox{  }{v_j}(t) = {\tau _{ - {x_j}(t)}}{\psi _{j,t}}(x){\partial _t}u(t).
$$
The localization of $u$ and $\partial_tu$ implies for any $\mu_0>0$, there exists $\eta>0$ such that
\begin{align*}
\mathop {\lim \sup }\limits_{t \to \infty } {\left\| {{P_{ > \eta }}{w_j}} \right\|_{{H^1}}} + {\left\| {{P_{ > \eta }}{v_j}} \right\|_{{L^2}}} \le {\mu _0},\mbox{  }\mbox{  }\mathop {\lim \sup }\limits_{t \to \infty } \int_{\left| x \right| > \eta } {{{\left| w_j \right|}^2} + {{\left| {\nabla w_j} \right|}^2} + {{\left| {v_j} \right|}^2}}  \le {\mu _0}.
\end{align*}
From Proposition \ref{kkkk}, there exist a compact set $K_1\subset H^1$ and a compact set $K_2\subset L^2$, such that
$$
\mathop {\lim }\limits_{t \to \infty } dist({w_j}(t),{K_1}) = 0,\mbox{  }\mbox{  }\mathop {\lim }\limits_{t \to \infty } dist({v_j}(t),{K_2}) = 0,
$$
for all $j=1,2,...,J$. Combining with (\ref{oi}), we obtain
$$
dist_\mathcal{H}(u,J(GK))=0,
$$
where $K=K_1\times K_2$.
\end{proof}

As a corollary of Proposition \ref{pph}, Proposition \ref{22}, Lemma \ref{13}, we have
\begin{Corollary}\label{hji}
There exists a compact set $K\subset \mathcal{H}$ and $0\le J<\infty$, such that
$$\mathop {\lim }\limits_{t \to \infty } dist_{\mathcal{H}}(u(t),J(GK))=0.$$
\end{Corollary}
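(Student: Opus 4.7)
The plan is to apply Proposition \ref{pph} with $\mathcal{U} = \{u\}$, the singleton family consisting of our fixed global solution, taking $J = J(E)$ from Proposition \ref{22}, where $E = \sup_{t \ge 0}\|(u,\partial_t u)(t)\|_{\mathcal{H}}$. The two hypotheses of Proposition \ref{pph} --- frequency localization and spatial localization around $J$ centers --- are precisely what Section 3 has already established, so the argument essentially reduces to matching parameters.

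First, to verify the frequency hypothesis, I would invoke Lemma \ref{13}: for each $\mu_0 > 0$ it produces $c(\mu_0) > 0$ so that $\limsup_{t \to \infty}\|P_{\ge 1/c(\mu_0)} u(t)\|_{H^1} \le \mu_0/2$ and $\limsup_{t \to \infty}\|P_{\ge 1/c(\mu_0)} \partial_t u(t)\|_{L^2} \le \mu_0/2$ hold simultaneously (equations (3.1) and (3.2) of the lemma). Summing these two bounds yields $\limsup_{t \to \infty}\|P_{> 1/c(\mu_0)} u(t)\|_{\mathcal{H}} \le \mu_0$, which is exactly the first hypothesis of Proposition \ref{pph} with parameter choice $\mu_1 = c(\mu_0)$.

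Next, to verify the spatial hypothesis, I would apply Proposition \ref{22}: for the same $\mu_0$ it supplies an $\eta(E,\mu_0) > 0$ and center functions $x_1(t),\ldots,x_J(t)$ so that the spatial tail of $|u|^2 + |\nabla u|^2 + |\partial_t u|^2$ outside the $\eta^{-1}$-neighbourhood of $\{x_j(t)\}$ is at most $\mu_0$ in the limsup. Choosing $\mu_1 := \min\{c(\mu_0),\eta\}$ verifies both hypotheses of Proposition \ref{pph} simultaneously; the proposition then yields a compact set $K \subset \mathcal{H}$ with $\lim_{t\to\infty}\operatorname{dist}_{\mathcal{H}}(u(t), J(GK)) = 0$, which is the desired conclusion.

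The only subtle point worth flagging is that the integer $J$ produced by Proposition \ref{22} depends only on the a priori bound $E$ and not on $\mu_0$, so a single $J$ works uniformly as the spatial-tail threshold shrinks --- this uniformity is exactly what Proposition \ref{pph} requires, since the hypothesis must hold for the same $J$ across all $\mu_0 > 0$. Beyond this bookkeeping check, I do not anticipate a real obstacle: the corollary is a clean assembly of the three preceding ingredients (Lemma \ref{13}, Proposition \ref{22}, and Proposition \ref{pph}), and all the substantive work has already been carried out in Section 3.
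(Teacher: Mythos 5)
Your argument is exactly the paper's: the corollary is stated there as an immediate consequence of Proposition~\ref{pph}, Proposition~\ref{22}, and Lemma~\ref{13}, with no further proof given, and your parameter-matching (taking $\mu_1 = \min\{c(\mu_0),\eta\}$ and noting $J=J(E)$ is uniform in $\mu_0$) fills in precisely the intended bookkeeping. Correct and same approach.
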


\section{Proof of theorem 1.1}
{\textit{Step one.}}
Combining Corollary \ref{hji} with Lemma B.7 in Tao \cite{TT}, we have for any $t_n\to \infty$, up to a subsequence there exits $J_1,J_2,...,J_M$ and $w_m\in J_m(GK)$ such that
\begin{align*}
 u({t_n}) &= \sum\limits_{m = 1}^M {{\tau _{{x_{m,n}}}}} {w_m} + {o_{H^1}}(1) \\
 {\partial _t}u({t_n}) &= \sum\limits_{m = 1}^M {{\tau _{{x_{m,n}}}}} {v_m} + {o_{L^2}}(1),\\
\end{align*}
where $x_{m,n}\in \Bbb R^d$ and they satisfies
$\mathop {\lim }\limits_{n \to \infty } \left| {{x_{m,n}} - {x_{k,n}}} \right| = \infty$, for $k\neq m$.

{\textit{Step two.}}
By linear energy decoupling property, we have $\mathop {\sup }\limits_m {\left\| {\left( {{w_m},{v_m}} \right)} \right\|_\mathcal{H}} < C$, by the local theory, there exists $T>0$ such that the solution $W_j$ to (\ref{1}) with initial data $(w_j,v_j)$ is wellposed on $[0,T]$. From perturbation theorem and separation of $x_{m,n}$,
we obtain
$$
\partial_tu({t_n} + t) = \sum\limits_{j = 1}^M {\partial_t{W_j}(x - {x_{j.n}}} ,t) + {o_{L^2}}(1).
$$
Since $\mathop {\lim }\limits_{n \to \infty } \int_0^T {\left\| {{\partial _t}u({t_n} + t)} \right\|_2^2} dt = 0$, by the separation of linear energy, we conclude
$$
\int_0^T {\left\| {{\partial _t}{W_j}(t)} \right\|_2^2} dt = 0.
$$
Therefore, $W_j$ is an equilibrium, the same holds for $w_j$, thus we have proved
there exists a finite number of equilibrium points $Q_m$ such that for any sequence $t_n\to\infty$, there exists $x_{m,n}$ for which
$$
u(t_n)=\sum\limits_{m = 1}^M Q_m(x-x_{m,n})+ {o_{H^1}}(1),  \mbox{  }\mbox{  }\partial_tu(t_n)={o_{L^2}}(1).
$$
By contradiction arguments, we can prove our theorem.

\end{document}